\documentclass[12pt, reqno]{amsart}
\usepackage{amssymb,amsmath,amsthm,newlfont,color}
  \usepackage{psfrag}
\usepackage[T1]{fontenc}
\usepackage{a4wide}

\usepackage[utf8]{inputenc}

\newcommand{\Ll}{\mathop{\textmd{L}}}

\numberwithin{equation}{section}
\newcommand{\N}{\mathbb{N}}
\newcommand{\R}{\mathbb{R}}
\newcommand{\C}{\mathbb{C}}
\newcommand{\D}{\mathbb{D}}

\newcommand{\cF}{{\mathcal{F}}}

\newcommand{\cC}{{\mathcal{C}}}

\newcommand{\cG}{{\mathcal{G}}}
\newcommand{\cK}{{\mathcal{K}}}

\newcommand{\dist}{\mathrm{dist}}

\newcommand{\Hol}{\mathrm{Hol}}

\newcommand{\beqa}{\begin{eqnarray*}}
\newcommand{\bea}{\begin{eqnarray}}
\newcommand{\eeqa}{\end{eqnarray*}}
\newcommand{\eea}{\end{eqnarray}}

\newtheorem{thm}{\bf Theorem}[section]
\newtheorem{coro}[thm]{\bf Corollary}

\newtheorem*{prop*}{\bf Proposition}

\newtheorem{rem}[thm]{\bf Remark}
\newtheorem{lem}[thm]{\bf Lemma}

\newcommand{\subeqsupset}{%
   \mathrel{\vcenter{
     \offinterlineskip
     \hbox{$\geqslant$}
  
     \hbox{$\leqslant$}
   }}%
}

\begin{document}
\title{Riesz bases of reproducing kernels in  small Fock spaces}
\author{K. Kellay \& Y. Omari}
\subjclass[2010]{primary  42C15; secondary  30C40, 30D10, 30H20, 42B35 }
\keywords{Riesz bases, complete interpolating sequences, small Fock spaces}

\address{Address: Univ. Bordeaux, CNRS, Bordeaux INP, IMB, UMR 5251,   F--33405 Talence, France}  
\email{kkellay@math.u-bordeaux.fr}
\address{Laboratory of Analysis and Applications, Faculty of Sciences, Mohammed V University,  Rabat, Morocco.}
\email{omariysf@gmail.com}
\thanks{The research of the first  author is partially  supported by by the project ANR-18-CE40-0035 and by 
the Joint French-Russian Research Project PRC CNRS/RFBR,2017-2019\\
\hspace*{0.3cm} The second author is supported by CNRST Grant 84UM52016.
}

\date{}
\maketitle
{\begin{abstract} We give a complete characterization of Riesz bases of normalized reproducing kernels in the small Fock  spaces $\cF^2_{\varphi}$, { the spaces} of entire functions $f$ such that $f\mathrm{e}^{-\varphi} \in L^{2}(\C)$, where $\varphi(z)= (\log^+|z|)^{\beta+1}$, $0< \beta \leq 1$.
The first results in this direction are due to  Borichev-{Lyubarskii}  who showed that  $\varphi$ with $\beta=1$ is the largest weight for which the corresponding Fock space admits Riesz bases of reproducing kernels. {Later, such bases were characterized by Baranov-Dumont-Hartman-Kellay  in the case when $\beta=1$.}  The present paper answers a question in Baranov et al. by extending their results  for all parameters  $\beta\in (0,1)$.  Our results are analogous to those obtained for the case $\beta=1$  and those proved for Riesz bases of complex exponentials for the Paley-Wiener spaces.  We also obtain a  description of complete interpolating sequences in small Fock spaces with corresponding uniform norm. 
\end{abstract}}

\section{Introduction and statement of main results}

For a subharmonic function $\varphi$ tending to infinity when $|z|\to \infty$, define the weighted Fock spaces $\cF^{p}_\varphi$, $p=2,\infty$,   the spaces of all entire functions $f$ for which $f\mathrm{e}^{-\varphi}$ 
belongs to $L^p(\C)$. Seip and Wallst\'en \cite{S1,SW} characterized interpolating and sampling sequences in these spaces when $\varphi(z)=|z|^2$. They showed that there are no sequences which are simultaneously interpolating and sampling, and hence there are no  Riesz bases of reproducing kernels in this case. {However,} the situation changes in small Fock spaces when the weight increases slowly. In \cite[Theorem 2.8, Theorem 2.10]{BL}, Borichev and Lyubarskii proved that the spaces $\cF^2_\varphi$ with $\varphi(z)=\varphi_\beta(z)=\left(\log^+|z|\right)^{\beta+1}$, $0<\beta\leq 1$, possess Riesz bases of normalized reproducing kernels at real points. Another proof was given by Baranov, Belov and Borichev in \cite[Theorem 1.2]{BBB} for the case where $\varphi(z)=O(\log^+ |z|)^2$ with some regularity conditions. {In this case,} the Fock spaces $\cF^2_\varphi$ can be viewed as {\it de Branges spaces} \cite[Theorem 1.2]{BBB}, see also \cite{BBB1}. {A more general approach based on the boundedness and invertibility  of a discrete Hilbert transform} was  given  by Belov, {Mengestie} and Seip in \cite{BMS}.   In a recent work,  Baranov, Dumond, Hartmann and Kellay \cite{BDHK} gave a characterization of complete interpolating sequences (or Riesz bases) in $\cF^2_\varphi${, when $\varphi(z)=(\log^+|z|)^2$,} in the spirit of Ingham-{Kadets} 1/4 theorem for the Paley-Wiener space  (see \cite{O} for the case of $\cF^p_\varphi,\  1\leq p<\infty$).  If the weight grows more rapidly than $(\log^+ |z|)^2$, then the associated Fock type space has no Riesz bases of (normalized) reproducing kernels, see \cite{BBB,BDK,BL,S,SW} and the references therein. Also, for the case when the corresponding Riesz measure of the weight $\varphi$ is doubling, Marco, {Massaneda} and Ortega-Cerd{\`a} in \cite{MMO} proved that $\cF^2_\varphi$ possesses no Riesz bases of normalized reproducing kernels. {Finally,  it follows from \cite{IY} that there is no criterion for the existence 
 of
  Riesz bases in a Fock-type space $\cF^2_\varphi$ consisting of reproducing kernels in terms of the growth of the weight function $\varphi$ only.} \\

The main question we deal with in this paper was posed in \cite{BDHK}. One looks  for  a characterization of Riesz bases of normalized {reproducing} kernels for $\cF^2_{\varphi_\beta}$, where $0<\beta<1$.  Indeed, we obtain a complete description of such bases. Our results are analogous to the well known {Avdonin theorem} \cite{Av} established  for Riesz bases of complex exponentials in the Paley-Wiener spaces (see  \cite[Theorem 14, p. 178]{Y} and \cite[p. 102]{AI}  for more general statement) and to  those obtained for  $\cF^2_{\varphi_1}$ (see \cite[Theorem 1.1]{BDHK}). We  also give  a characterization of complete interpolating sequences in $\cF^{\infty}_{\varphi_\beta}$ for $0<\beta<1$. The case when $\beta=1$  was {studied} in \cite[Theorem 1.2]{BDHK}.

\subsection{Description of Riesz bases of $\cF^2_\varphi$}
 Let  $\varphi $  be  an increasing function defined on $[0,\infty)$  tending to infinity. We extend $\varphi$ into the whole complex plane $\C$  by $\varphi(z)=\varphi(|z|)$
 and  
we define the corresponding  {\it Fock type space} 
\[
\cF^2_\varphi:=\left\{f\in\Hol(\C)\ :\ \|f\|^{2}_{\varphi,2}:=\int_\C|f(z)|^2\mathrm{e}^{-2\varphi(z)}\mathrm{d}m(z)<\infty\right\},
\]
here $m$ stands for the area Lebesgue measure. { Since point evaluations are bounded linear functionals on}  $\cF^2_\varphi$,  by the representation Riesz Theorem the space $\cF^2_\varphi$ endowed with the  inner product  
\begin{eqnarray}
\langle f,g \rangle_\varphi\ :=\ \int_\C\ f(z)\overline{g(z)} \mathrm{e}^{-2\varphi(z)}\ \mathrm{d}m(z),\quad f,g\in\cF^2_\varphi, \nonumber
\end{eqnarray}
is a reproducing kernel Hilbert space (RKHS). Let $\mathrm{k}_z$ be its reproducing kernel at $z\in\C$, that is 
\[
f(z) = \langle f,\mathrm{k}_z  \rangle_\varphi, \qquad  f\in\cF^2_\varphi.
\]
We denote by $\Bbbk_z={\mathrm{k}_z }/{\|\mathrm{k}_z  \|_{\varphi,2}}$ the normalized reproducing kernel of $\cF^2_\varphi$. Let $\Gamma$ be a sequence of complex numbers. A system of normalized reproducing kernels $\cK_\Gamma=\left\{\Bbbk_\gamma\right\}_{\gamma\in\Gamma}$ is said to be a {\it Riesz basis} for $\cF^2_\varphi$ if $\cK_\Gamma$ is {\it complete} and if for some constant $C\geq 1$ we have 
$$
\frac{1}{C}\sum_{\gamma\in\Gamma }|a_\gamma|^2\leq \left\| \sum_{\gamma\in\Gamma }a_\gamma  \Bbbk_\gamma \right\|_{\varphi,2}^{2}\leq {C}\sum_{\gamma\in\Gamma }|a_\gamma|^2,
$$
 for each finite sequence $\{a_{\gamma}\}$. This means that $\Gamma$ is a {\it complete interpolating sequence} for $\cF^2_\varphi.$ In an equivalent way, the operator 
$$
\begin{array}{cccl}
T_\Gamma : & \cF^2_\varphi & \longrightarrow & \ell^2(\Gamma) \\
  & f & \longmapsto & \left(\langle f,\Bbbk_{\gamma}\rangle\right)_{\gamma\in\Gamma }
\end{array}$$
is bounded and invertible from $\cF^2_\varphi$ to $\ell^2(\Gamma)$.

{
We next set 
$$d(z,w)=\frac{|z-w|}{1+\min(|z|,|w|)},\qquad z,w\in \C.$$
A set $\Sigma\subset \C$ is said to be {\it $d-$separated} if there is 
$d_\Sigma>0$ such that
$$\inf \left\{d(\sigma,\sigma^*)\ :\  \sigma,\sigma^*\in \Sigma,\   \sigma\neq \sigma^*\right\}\ge d_\Sigma.$$ 
We denote by $\D(z,r)$ the Euclidean disk of radius $r$ centered at $z$. For $\delta<1$ and $0\neq z\in\C$ the ball 
corresponding to the distance $d$ is given by
$$ D_{d}(z,\delta):=\{w\in \C\ :\ d(z,w)<\delta\}.
$$
Hence,   $D_{d}(z,\delta)$ is comparable to  $\D(z,c_\delta |z|)$ with a suitable constant $c_\delta$ depending on $\delta$.  Thus  $\Sigma$ is $d-$separated if and only if there
exists $c>0$ such that the Euclidean disks $\D(\sigma, c|\sigma|)$,
$\sigma\in\Sigma$, are disjoint.} Our first main result in this paper is the following

\begin{thm}\label{thm30}
Let $\varphi(r)=\left(\log^+r\right)^{\beta+1}$ where $0< \beta < 1$, and let $\Gamma=\{\gamma_n\ :\ {n\geq 0}\}$ be a sequence of complex numbers such that $|\gamma_n|\leqslant |\gamma_{n+1}|$. We write $\gamma_n=\exp\Big(\frac{1+n}{1+\beta}\Big)^{\frac{1}{\beta}}\mathrm{e}^{\delta_n}\mathrm{e}^{i \theta_n}$, for every $n\geq 0$, where $(\delta_n)_n$ and $(\theta_n)$ are real sequences. Then $\cK_\Gamma=\{\Bbbk_\gamma\}_{\gamma\in\Gamma}$ is a Riesz basis  for $\cF^2_\varphi$ if and only if the following three conditions hold: 
\begin{enumerate}
\item\label{first} {$\Gamma$ is $d-$separated,}
\item\label{second} $\left(\delta_n/(1+n)^{\frac{1}{\beta}-1}\right)_{n\geq 0}$ is a bounded sequence,
\item\label{third} there exists $N\geqslant 1$ such that 
\begin{eqnarray}
\limsup_{n}\frac{1}{(1+n+N)^{\frac{1}{\beta}}-(1+n)^{\frac{1}{\beta}}}\Big|\sum_{k=n+1}^{n+N} 
\delta_k\Big|\ <  \frac{1}{2(1+\beta)^{\frac{1}{\beta}}}. \label{DeltaCondition}
\end{eqnarray}
\end{enumerate}
\end{thm}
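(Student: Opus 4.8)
The plan is to reduce the Fock-space problem to a model-space / de~Branges-space problem where the classical Avdonin-type machinery applies. First I would fix the ``canonical'' sequence $\lambda_n = \exp\bigl((\tfrac{1+n}{1+\beta})^{1/\beta}\bigr)$ (i.e.\ $\delta_n=\theta_n=0$) and record the basic estimates for $\cF^2_\varphi$ with $\varphi(r)=(\log^+ r)^{\beta+1}$: the growth of $\|\mathrm{k}_z\|_{\varphi,2}$, a sharp two-sided estimate for $|\mathrm{k}_z(w)|$ off the diagonal, and the fact that $\{\Bbbk_{\lambda_n}\}$ is a Riesz basis (this is essentially Borichev--Lyubarskii, which we may cite). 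Writing $z=e^{t+i\vt}$, the natural change of variable $t=\log|z|$ turns $\varphi$ into $t^{\beta+1}$ and the points $\log|\lambda_n|=(\tfrac{1+n}{1+\beta})^{1/\beta}$ become, after one more substitution $s=t^{\beta}$ or rather after measuring things against the density $\varphi'(t)=(\beta+1)t^\beta$, an \emph{arithmetic-like} progression; condition \eqref{second} says $\delta_n=o$ of the local gap and \eqref{third} is exactly a mean-oscillation condition on the multiplicative perturbation, i.e.\ the Avdonin ``$1/4$ in the mean'' condition transported to this scale. So the real content is that, in the correct coordinates, $\cF^2_{\varphi_\beta}$ behaves like a Paley--Wiener/de~Branges space with a slowly varying ``phase'' $\tilde\varphi$, and $\Gamma$ is a complete interpolating sequence iff its image is a CIS for that de~Branges space.

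Concretely I would proceed as follows. \textbf{(Necessity.)} Assume $\cK_\Gamma$ is a Riesz basis. The upper Riesz bound forces the Bessel/separation property: distinct normalized kernels cannot be too close, which after the kernel estimates yields \eqref{first}. For \eqref{second} and \eqref{third} I would construct an appropriate generating (entire) function $G$ with zero set $\Gamma$ — the analogue of $\prod(1-z/\gamma_n)$, suitably regularized — show $G\,e^{-\varphi}$ has the right two-sided growth on circles (here the hypothesis $|\gamma_n|\le|\gamma_{n+1}|$ and a Blaschke-type/Jensen counting argument enter), and then read off that a failure of \eqref{second} makes some $\Bbbk_{\gamma_n}$ nearly orthogonal to the span of the others (destroying completeness or the lower bound), while a failure of \eqref{third} with the constant $\tfrac{1}{2(1+\beta)^{1/\beta}}$ produces, via a Beurling-type density/gap argument on the transformed sequence, either a nonzero element of $\cF^2_\varphi$ vanishing on $\Gamma$ or an $\ell^2$ sequence not in the range of $T_\Gamma$. \textbf{(Sufficiency.)} Conversely, given \eqref{first}--\eqref{third}, I would build the generating function $G_\Gamma$ explicitly, prove the sampling inequality $\sum_n |f(\gamma_n)|^2\|\mathrm{k}_{\gamma_n}\|^{-2} \asymp \|f\|_{\varphi,2}^2$ by a partition-of-unity / $\bar\partial$-argument à la Seip, using \eqref{first} for the upper bound and \eqref{second}--\eqref{third} to control $G_\Gamma$ from below on a system of circles separating the $\gamma_n$'s (this is where the strict inequality $<\tfrac{1}{2(1+\beta)^{1/\beta}}$ is used — it guarantees $|G_\Gamma|e^{-\varphi}$ stays bounded below on those circles), and finally invoke the standard fact that the simultaneous sampling+interpolation property is equivalent to $\cK_\Gamma$ being a Riesz basis.

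The main obstacle, and where I expect most of the work to go, is the sharp analysis of the generating function $G_\Gamma$: getting precise, scale-invariant two-sided bounds for $\log|G_\Gamma(z)| - \varphi(z)$ on well-chosen circles $|z|=R_m$, with the \emph{correct constant} matching $\tfrac{1}{2(1+\beta)^{1/\beta}}$. This requires summing $\sum_n \log|1 - z/\gamma_n|$ against the density coming from $\varphi$, handling the perturbations $\delta_n$ through an Abel-summation/averaging argument that converts the pointwise sizes of partial sums in \eqref{third} into a uniform bound, and dealing with the angular variables $\theta_n$ (which, notably, are \emph{unconstrained} in the theorem — this is the Fock-space analogue of the fact that for exponential systems on the line only the real parts are constrained, and it must be explained by the rotational near-invariance of $\cF^2_{\varphi_\beta}$ together with $d$-separation absorbing any genuine clustering). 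A secondary technical point is the transference between $p=2$ and the uniform-norm space $\cF^\infty_{\varphi_\beta}$ needed for the companion statement, but for Theorem~\ref{thm30} itself the crux is the quantitative control of $G_\Gamma$.
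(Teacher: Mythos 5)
Your skeleton is right in one respect: the proof does hinge on the canonical sequence $\Lambda$ (with $\delta_n=\theta_n=0$), on the generating function $G_\Gamma$, and on a sharp two-sided estimate of the form $|G_\Gamma(z)|\mathrm{e}^{-\varphi(z)}\asymp \dist(z,\Gamma)(1+|z|)^{-3/2\pm(\Delta_N+\varepsilon)}$, from which the constant $\tfrac{1}{2(1+\beta)^{1/\beta}}$ emerges (it is exactly the normalization making $\Delta_N<1/2$). But the two mechanisms you propose to close the argument both have genuine gaps. For sufficiency, the ``sampling inequality via a $\bar\partial$/partition-of-unity argument \`a la Seip, using that $|G_\Gamma|\mathrm{e}^{-\varphi}$ is bounded below on separating circles'' does not work here: first, $|G_\Gamma|\mathrm{e}^{-\varphi}$ decays like $|z|^{-1/2}$ on such circles rather than being bounded below, and second --- more seriously --- the scheme presupposes $\|\mathrm{k}_z\|^2\asymp\Delta\varphi(z)\mathrm{e}^{2\varphi(z)}$, which \emph{fails} for $0<\beta<1$: the paper shows $\mathrm{e}^{2\varphi(z)}/\rho(z)^2\lesssim\|\mathrm{k}_z\|^2\lesssim \mathrm{e}^{2\varphi(z)}/(|z|\rho(z))$ with both extremes attained on different sets of radii, and explicitly flags this oscillation as the reason the $\beta<1$ case differs from $\beta=1$. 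Your plan never engages with this, and any density/$\bar\partial$ route founders on it. The paper instead first proves $\cK_\Lambda$ is a Riesz basis by Bari's theorem (estimating $\sum_n\|h_n-\Bbbk_{\lambda_n}\|^2<\infty$ against the monomial basis), and then reduces everything to the $\ell^2$-boundedness and invertibility of the explicit change-of-basis matrix $A_{n,m}=\frac{G_\Gamma(\lambda_m)}{G'_\Gamma(\gamma_n)(\lambda_m-\gamma_n)}\cdot\frac{\|\mathrm{k}_{\gamma_n}\|}{\|\mathrm{k}_{\lambda_m}\|}$, whose entries are shown to satisfy $|A_{n,m}|\asymp\exp\bigl[-(\tfrac12+o(1))|u_n-u_m|\pm\sum\delta_k\bigr]$ with $u_k=\log|\lambda_k|$; the kernel-norm oscillation is absorbed by splitting $|A_{n,m}|$ into two pieces corresponding to the two asymptotic regimes.

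For the necessity of condition \eqref{third}, a ``Beurling-type density/gap argument'' is the wrong mechanism: violating \eqref{third} changes no density whatsoever (given \eqref{second} the counting function is already pinned down), which is precisely why this is an Avdonin-type and not a density theorem. What actually fails when $\Delta_N\geq 1/2$ for all $N$ is the $\ell^2$-boundedness of the matrix $A$: the paper exhibits explicit entries $A_{n_N+N,n_N}$ or $A_{n_N,n_N+N}$ that blow up, using the \emph{lower} bound on $|A_{n,m}|$ (this is where the matching two-sided estimate of $G_\Gamma$, and the lower kernel bound, are indispensable). Similarly, necessity of \eqref{second} is obtained not by near-orthogonality but by testing the biorthogonal functions $f_\gamma=\|\mathrm{k}_\gamma\|\,G_\Gamma/(G'_\Gamma(\gamma)(\cdot-\gamma))$ against $\Bbbk_{\lambda_m}$ and deriving a contradiction from $|f_{\gamma_n}(\lambda_m)|\lesssim\|\mathrm{k}_{\lambda_m}\|$. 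So while you have correctly located where the constant comes from, the proposal as written is missing the operative idea (Bari's theorem plus quantitative two-sided control of the biorthogonal/change-of-basis matrix) and substitutes arguments that would not survive the anomalous kernel asymptotics of the $0<\beta<1$ regime.
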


\subsection{Complete interpolating sequences in  uniform Fock spaces $\cF^\infty_{\varphi}$.}

In this subsection we  consider  the uniform Fock type space, associated with $\varphi$, given as follows
\begin{eqnarray}
\cF^\infty_\varphi:=\left\{f\in\Hol(\C)\ :\ \|f\|_{\varphi,\infty}:=\sup_{z\in \C} |f(z)|\mathrm{e}^{-\varphi(z)} <\infty\right\}. \nonumber
\end{eqnarray}
$\cF^\infty_\varphi$ endowed with the norm $\|\ .\ \|_{\varphi,\infty}$ is clearly a Banach space. Let $\Gamma=\{\gamma_n\}$ be a sequence in $\C$. $\Gamma$ is said to be {\it sampling} for $\cF^\infty_\varphi$ whenever there exists $C\geq 1$ such that 
\begin{eqnarray*}
\|f\|_{\varphi,\infty}\ \leq\ C \|f\|_{\varphi,\infty,\Gamma},
\end{eqnarray*}  
for every $f\in\cF^\infty_\varphi$, where
\[\|f\|_{\varphi,\infty,\Gamma} :=\ \sup_{\gamma\in \Gamma} |f(\gamma)|\mathrm{e}^{-\varphi(\gamma)}.\]
$\Gamma$ is called an {\it interpolating set} for $\cF^\infty_\varphi$ if for every sequence $v=\left(v_\gamma\right)_{\gamma\in\Gamma}$ such that $\|v\|_{\varphi,\infty,\Gamma}<\infty$ there exists $f\in\cF^\infty_\varphi$ that satisfies $f(\gamma) = v_\gamma$, for every $\gamma\in\Gamma$. Finally, $\Gamma$ is said to be  a {\it complete interpolating sequence} for $\cF^\infty_\varphi$ if $\Gamma$ is simultaneously a sampling and an interpolating set for $\cF^\infty_\varphi$. \\

The following result describes complete interpolating sequences for $\cF^\infty_\varphi$,  the case  $\beta=1$  was obtained in \cite{BDHK}.

\begin{thm}\label{thmInfty}  Let $\varphi(r)=\left(\log^+r\right)^{\beta+1}$ where $0< \beta < 1$. {Let $\Gamma$  be a sequence of complex numbers} and let $\gamma^*\in\C\setminus\Gamma$. Then $\Gamma\cup\{\gamma^*\}$ is a complete interpolating set for $\cF^\infty_\varphi$ if and only if  $\cK_\Gamma$ is a Riesz basis for $\cF^2_\varphi$.  
\end{thm}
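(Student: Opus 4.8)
The plan is to run both implications through a single object, a generating function for $\Gamma$: the genus-zero canonical product $G(z)=\prod_{n\ge 0}\bigl(1-z/\gamma_n\bigr)$, which converges because the radii grow rapidly ($\log|\gamma_n|\asymp n^{1/\beta}$ with $1/\beta>1$). The guiding principle — visible for $\cF^2_{\varphi_1}$ in \cite{BDHK}, and classically for the Paley--Wiener space, where $G(z)=\sin\pi z$, $\Gamma=\Z$ and $\gamma^*=\tfrac12$ — is that $\cK_\Gamma$ is a Riesz basis for $\cF^2_\varphi$ if and only if $G$ is \emph{of sine type for the weight} $\varphi$: $G\in\cF^\infty_\varphi$ (upper bound), while $|G(z)|\mathrm{e}^{-\varphi(z)}$ is bounded below on a family of circles $|z|=R_k\to\infty$ avoiding $\Gamma$, and $|G'(\gamma)|\mathrm{e}^{-\varphi(\gamma)}\gtrsim (1+|\gamma|)^{-1}$ for $\gamma\in\Gamma$ (lower bounds). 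These estimates, and their equivalence with the three conditions of Theorem~\ref{thm30}, are precisely what the proof of that theorem produces; the extra point $\gamma^*$ will encode the codimension-one discrepancy between the $L^2$ and $L^\infty$ theories.

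\emph{Sufficiency: $\cK_\Gamma$ a Riesz basis $\Rightarrow$ $\Gamma\cup\{\gamma^*\}$ complete interpolating for $\cF^\infty_\varphi$.} For \emph{uniqueness}: if $f\in\cF^\infty_\varphi$ vanishes on $\Gamma$ then, the zeros of $G$ being simple and exhausting $\Gamma$, the quotient $h:=f/G$ is entire; on the circles $|z|=R_k$ one has $|h(z)|\le\|f\|_{\varphi,\infty}\,\mathrm{e}^{\varphi(z)}/|G(z)|\lesssim\|f\|_{\varphi,\infty}$, so by the maximum principle $h$ is bounded on $\C$, hence constant, $f=cG$; then $f(\gamma^*)=0$ with $G(\gamma^*)\ne 0$ forces $f\equiv 0$. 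For \emph{interpolation}, given $v$ with $\|v\|_{\varphi,\infty,\Gamma\cup\{\gamma^*\}}<\infty$, put
\[
f(z)=\frac{v_{\gamma^*}}{G(\gamma^*)}\,G(z)+(z-\gamma^*)\sum_{\gamma\in\Gamma}\frac{v_\gamma}{(\gamma-\gamma^*)\,G'(\gamma)}\cdot\frac{G(z)}{z-\gamma}.
\]
Each term is entire, $f(\gamma^*)=v_{\gamma^*}$ and $f(\gamma)=v_\gamma$ for $\gamma\in\Gamma$; that $f\in\cF^\infty_\varphi$ follows from $G\in\cF^\infty_\varphi$, the lower bound for $|G'(\gamma)|$, the bound $|G(z)|\mathrm{e}^{-\varphi(z)}\lesssim\min\bigl(1,d(z,\Gamma)\bigr)$ near the zero shells, and convergence of $\sum_\gamma|z-\gamma|^{-1}$ on shells (guaranteed by the rapid growth of $|\gamma_n|$ and by $d$-separation). \emph{Sampling} is then automatic: the restriction map $R\colon\cF^\infty_\varphi\to\{v:\|v\|_{\varphi,\infty,\Gamma\cup\{\gamma^*\}}<\infty\}$ is a bounded (since $|f(\sigma)|\mathrm{e}^{-\varphi(\sigma)}\le\|f\|_{\varphi,\infty}$) linear bijection of Banach spaces, so by the open mapping theorem $R^{-1}$ is bounded, which is exactly the sampling inequality.

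\emph{Necessity: $\Gamma\cup\{\gamma^*\}$ complete interpolating for $\cF^\infty_\varphi$ $\Rightarrow$ $\cK_\Gamma$ a Riesz basis.} By interpolation there is $G\in\cF^\infty_\varphi$ with $G|_\Gamma=0$ and $G(\gamma^*)=1$. Uniqueness of $\Gamma\cup\{\gamma^*\}$ makes $\{f\in\cF^\infty_\varphi:f|_\Gamma=0\}$ one-dimensional, equal to $\C G$ (two independent such functions could be combined to vanish at $\gamma^*$ as well); one-dimensionality then forces $G$ to have simple zeros and none off $\Gamma$, since otherwise $G(z)/(z-w)$ would be an independent element of that space. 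Thus $G$ is a bona fide generating function for $\Gamma$. It remains to recover the conditions of Theorem~\ref{thm30}. The $d$-separation of $\Gamma$ follows from Cauchy estimates applied to uniformly bounded solutions of the spike problems ($v_\gamma=\mathrm{e}^{\varphi(\gamma)}$, all other data $0$). The two-sided estimate for $G$ is obtained from these same bounded solutions by dividing by $G$: the quotient is meromorphic with a single simple pole, vanishes at $\gamma^*$ (because the solution does and $G(\gamma^*)\ne 0$), and once it is known to be bounded on the mid-circles its entire part must vanish, leaving residue $\mathrm{e}^{\varphi(\gamma)}/G'(\gamma)$ and hence the lower bounds; the matching upper bound is just $G\in\cF^\infty_\varphi$. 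Feeding these estimates into the argument behind Theorem~\ref{thm30} returns conditions \ref{first}--\ref{third}, so $\cK_\Gamma$ is a Riesz basis.

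\emph{Main obstacle.} The analytic core is the sine-type estimate for $G$ adapted to the slowly growing weight $\varphi(r)=(\log^+r)^{\beta+1}$ — the upper bound everywhere, the lower bound on circles lying between consecutive zero shells, and the comparison of $\log|G|$ with $\varphi$ — and matching it against the delicate averaged inequality \eqref{DeltaCondition}. In the necessity direction the genuine difficulty is to bootstrap these estimates from bare bounded $\cF^\infty$-interpolation without circularity: one must secure $d$-separation first, then the lower bound on the mid-circles (so that the entire part of the quotient functions can be shown to vanish), and only then the derivative bounds. The bookkeeping of the single point $\gamma^*$ — it must supply exactly the one missing dimension and annihilate those entire parts — is, once the Paley--Wiener analogy is in place, routine.
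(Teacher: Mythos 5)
Your overall architecture (generating function $G_\Gamma$, Lagrange-type interpolation series, division for uniqueness, transfer of conditions \eqref{first}--\eqref{third}) is the same as the paper's, but the estimates you build it on are not the ones that actually hold, and this is a substantive gap. By Lemma \ref{lem-estim} the true two-sided bound is
\[
\frac{\dist(z,\Gamma)}{(1+|z|)^{3/2+\Delta_N+\varepsilon}}\ \lesssim\ |G_\Gamma(z)|\,\mathrm{e}^{-\varphi(z)}\ \lesssim\ \frac{\dist(z,\Gamma)}{(1+|z|)^{3/2-\Delta_N-\varepsilon}},
\]
so $|G_\Gamma(z)|\mathrm{e}^{-\varphi(z)}\lesssim(1+|z|)^{-1/2+\Delta_N+\varepsilon}\to 0$ uniformly once $\Delta_N+\varepsilon<1/2$: it is \emph{not} bounded below on any family of circles, and $|G'(\gamma)|\mathrm{e}^{-\varphi(\gamma)}$ is only bounded below by $(1+|\gamma|)^{-3/2-\Delta_N-\varepsilon}$, not by $(1+|\gamma|)^{-1}$. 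Moreover the Riesz basis property does not force $G_\Gamma$ to be of sine type: condition \eqref{third} only requires $\Delta_N<1/2$ for some $N$, so the upper and lower bounds above may genuinely differ by a factor $(1+|z|)^{2(\Delta_N+\varepsilon)}$. (Your uniqueness argument survives, since $h=f/G_\Gamma$ then has growth of order $|z|^{1/2+\Delta_N+\varepsilon}$ with exponent $<1$ and is still constant.)

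The step that actually breaks is the membership of your interpolation series in $\cF^\infty_\varphi$. Feeding the crude global bounds into the sum, the term indexed by $\gamma$ at a point with $|z|\gg|\gamma|$ is only controlled by $\|v\|\,\mathrm{e}^{\varphi(z)}(1+|\gamma|)^{1/2+\Delta_N+\varepsilon}(1+|z|)^{-1/2+\Delta_N+\varepsilon}$, and summing over $|\gamma|\le|z|$ yields $\mathrm{e}^{\varphi(z)}(1+|z|)^{2\Delta_N+2\varepsilon}$, unbounded relative to $\mathrm{e}^{\varphi(z)}$ whenever $\Delta_N>0$; the convergence of $\sum_\gamma|z-\gamma|^{-1}$ is not the relevant quantity. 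What closes the argument in the paper is a \emph{relative} estimate: after reducing $\sup_z$ to a supremum over $\Lambda\cup\{\lambda^*\}$ via Lemma \ref{propo}, the matrix entries satisfy $|B_{n,m}|\asymp|A_{n,m}|\mathrm{e}^{o(1)|u_m-u_n|}$ with $|A_{n,m}|\asymp\exp\bigl[-(\tfrac12+o(1))|u_n-u_{m+i}|\pm\sum_k\delta_k\bigr]$ (here $u_n=\log|\lambda_n|$), where the loss $\sum_k\delta_k$ runs only over the indices \emph{between} $n$ and $m$ and is therefore dominated by $(\Delta_N+\tfrac{\varepsilon}{2})|u_n-u_m|$ through \eqref{third} --- a cancellation invisible to global polynomial bounds. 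The same issue affects your necessity direction, which aims to recover sine-type bounds that are strictly stronger than what the Riesz basis (or complete interpolation) property implies; the paper instead shows directly that failure of \eqref{second} or \eqref{third} makes the matrix $(B_{n,m})$ unbounded, and gets $d$-separation by passing to the weight $\varphi_1$ and citing \cite{BDHK}.
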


\subsection{Remarks} 
Several remarks are in order before we turn to the proofs of our theorems:\\

{{\bf 1.} Complete interpolating sequences  for $\cF_\varphi^p$, $p=2,\infty$   are characterized by comparing them to the  sequence
 \begin{equation}\label{Lambda}
 \Lambda=\left\{\lambda_n:=\exp{\Big(\frac{1+n}{1+\beta}\Big)^{\frac{1}{\beta}}}\mathrm{e}^{i \theta_n},\quad \theta_n\in \R \text{ : } n\geq 0\right\},
 \end{equation}
  which is a complete interpolating sequence  for $\cF^2_\varphi$ and $\Lambda\cup \{\lambda^*\}$ is a complete interpolating sequence  for $\cF^\infty_\varphi$ where $\lambda^*\in \C\setminus\Lambda$ (for $p=2$ see \cite[Theorem 1.2]{BBB} and Lemma \ref{thm1} and for the case $p=\infty$ see  Lemma \ref{propo} ). }    \\

{{\bf 2.}  The diagonal asymptotic estimates of the reproducing kernel {play} an important role  in the study of Riesz bases.  Let $\rho(z):=\left(\Delta\varphi(z)\right)^{-1/2}=|z|\left(\log|z|\right)^{\frac{1-\beta}{2}}$, {as} stated in Corollary \ref{keernel} the kernel admits the following estimates
$$
\frac{\mathrm{e}^{2\varphi(z)}}{\rho(z)^2}\ \lesssim \left\|\mathrm{k}_z\right\|_{\varphi,2}^2\ \lesssim\ \frac{\mathrm{e}^{2\varphi(z)}}{|z|\rho(z)},\qquad |z|>1.
$$
Furthermore the upper estimate is attained on a subset of $z$ (see Lemma \ref{Kernel_Estimate} for the precise diagonal asymptotic estimates)}.\\

{\bf 3. }The case $\beta = 1$ was treated in \cite[Theorem 1.1]{BDHK}  {under} conditions \eqref{first}, \eqref{second} and  under the hypothesis that  there exists $N\geqslant 1$ such that 
\begin{eqnarray}
\sup_{n}\frac{1}{N}\Big|\sum_{k=n+1}^{n+N} 
\delta_k\Big| <  \frac{1}{4}.
 \label{DeltaBDHK}
\end{eqnarray}
The family $\cK_\Lambda=\{\Bbbk_\lambda\}_{\lambda\in\Lambda}$ (associated with $\beta=1$) is a Riesz basis  for $\cF^2_{\varphi_1}$. {Condition \eqref{DeltaCondition} may appear different to \eqref{DeltaBDHK} in the case $\beta=1$; however, these two conditions are equivalent for the problem studied here (see Remark \ref{rem1}).} \\

{\bf 4.} Condition \eqref{DeltaCondition} can be written as
\begin{eqnarray}\label{Deltacondition2}
\Delta_N:=\limsup_{n}\frac{1}{\log|\lambda_{n+N}|-\log|\lambda_n|}\Big|\sum_{k=n+1}^{n+N} 
\delta_k\Big|\ < \frac{1}{2}.
\end{eqnarray}
For $N=1$,  condition \eqref{DeltaCondition} is equivalent to
\begin{equation}\label{kadetscond}
\underset{n\geq 1}{\sup}\ \frac{ \big|\log |\gamma_n|-\log|\lambda_n|\big|}{n^{\frac{1}{\beta}-1}}=\underset{n\geq 1}{\sup}\ \frac{
|\delta_n|}{n^{\frac{1}{\beta}-1}} <\frac{1}{2\beta(1+\beta)^{\frac{1}{\beta}}},
\end{equation}
(see Remark \ref{rem1}).  { Conditions \eqref{Deltacondition2} and  \eqref{kadetscond} look  similar to those used in some related results}.  We mention here  the  well known 1/4 {Kadets-Ingham's theorem} \cite{Ka} on  Riesz bases of exponentials in the Paley-Wiener spaces $PW^2_\alpha$, and the results  by Marzo and Seip  \cite{MS} for  spaces of polynomials. For fixed $N$, \eqref{Deltacondition2}  looks similar to Avdonin's condition \cite{Av}. {It is interesting to note that  conditions \eqref{first}-\eqref{third} describe completely Riesz bases of reproducing kernels in $\cF^2_\varphi$; on the contrary, in the Paley-Wiener spaces, these conditions are just sufficient}.\\

{\bf 5.} We notice again that our spaces possess Riesz bases of normalized reproducing kernels at real points, and hence they can be viewed as de Branges spaces \cite{BBB1, BBB}. Using the boundedness and invertibility results on the discrete Hilbert transform on lacunary sequences,   Belov, Mengestie and  Seip  {gave} another characterization of Riesz bases, where our  summability condition  \eqref{DeltaCondition} corresponds  to a Muckenhoupt-type condition;   for further details we refer to \cite[Theorem 1.1]{BMS}.  Our approach consists of using  Bari's theorem on  Riesz bases in Hilbert spaces,  as in  the proof of \cite[Theorem 1.1]{BDHK}.\\

{The plan of our paper is as following. In the next section we state diagonal asymptotic estimates on the reproducing kernel. We then show  that $\Lambda$ and $\Lambda\cup\{\lambda^*\}$ are complete interpolating sequences for $\cF^2_\varphi$ and $\cF^\infty_\varphi$, respectively. {Furthermore, we}  deal with the separation condition. Section \ref{sectthree} is devoted to proving Theorem \ref{thm30}. The proof of Theorem \ref{thmInfty} is presented in Section \ref{sectioninftinity}. We end our paper by some remarks in the last section.}\\

 Throughout the paper, we use the following notations:  
 \begin{itemize}
 \item $A\lesssim B$ means that there is an absolute constant $C$ such that $A \leq CB$.  
 \item $A\asymp B$  if both $A\lesssim B$ and $B\lesssim A$ hold. 
 \end{itemize}
 
\subsection*{Acknowledgement.} The authors are grateful to A. Borichev, O. El-Fallah,  Yu. Lyubarskii and J. Ortega-Cerd\`a for helpful discussions.  We also would like to thank the referees for their detailed comments and suggestions.

\section{Key lemmas and preliminary results}

In this section we prove some preliminary results and some key lemmas. First, we establish estimates on the reproducing kernel at the diagonal  also get necessary estimates on certain infinite products and also  preliminary results concerning the separation condition.  Throughout the rest of this paper we denote $\varphi(r)=\varphi_\beta(r)=\left(\log^+r\right)^{\beta+1}$, where $0<\beta<1$. 

\subsection{Estimates on the norm of the reproducing kernel}
We begin by the following lemma which was proved in \cite[Lemma 3.1]{BBB} and which will
be used repeatedly throughout the paper.

\begin{lem}\label{moment}
For every nonnegative integer $n$ we have
\[
\|z^n\|^{2}_{\varphi,2} \asymp \Big(\frac{1+n}{1+\beta}\Big)^{\frac{1-\beta}{2\beta}}\exp\Big(2\beta\Big(\frac{1+n}{1+\beta}\Big)^{\frac{1+\beta}{\beta}}\Big).
\]
\end{lem}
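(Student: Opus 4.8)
The plan is to evaluate $\|z^n\|_{\varphi,2}^2$ directly and then apply Laplace's method to the resulting one‑dimensional integral. Passing to polar coordinates and using that $\log^+r=0$ on $[0,1]$,
\[
\|z^n\|_{\varphi,2}^2\;=\;2\pi\int_0^\infty r^{2n+1}\mathrm{e}^{-2(\log^+r)^{\beta+1}}\,\mathrm{d}r\;=\;\frac{\pi}{n+1}\;+\;2\pi\int_1^\infty r^{2n+1}\mathrm{e}^{-2(\log r)^{\beta+1}}\,\mathrm{d}r .
\]
The first term is bounded and hence negligible next to the right–hand side of the lemma, which grows like $\exp\big(cn^{(1+\beta)/\beta}\big)$. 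In the remaining integral I substitute $r=\mathrm{e}^t$, turning it into $2\pi\int_0^\infty\mathrm{e}^{g_n(t)}\,\mathrm{d}t$ with $g_n(t)=(2n+2)t-2t^{\beta+1}$.

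The function $g_n$ is strictly concave on $(0,\infty)$ and attains its maximum at the unique point $t_n=\big((n+1)/(\beta+1)\big)^{1/\beta}$, where a short computation gives
\[
g_n(t_n)=2\beta\Big(\frac{1+n}{1+\beta}\Big)^{\frac{1+\beta}{\beta}},\qquad g_n''(t_n)=-2\beta(\beta+1)\Big(\frac{1+n}{1+\beta}\Big)^{\frac{\beta-1}{\beta}} .
\]
The first of these is exactly the exponent in the statement, and with $\sigma_n:=|g_n''(t_n)|^{-1/2}\asymp\big((1+n)/(1+\beta)\big)^{(1-\beta)/(2\beta)}$ the Laplace heuristic $\int_0^\infty\mathrm{e}^{g_n}\,\mathrm{d}t\asymp\sigma_n\,\mathrm{e}^{g_n(t_n)}$ reproduces precisely the claimed two–sided estimate, including the polynomial prefactor.

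It remains to justify this with constants independent of $n$. For the lower bound I keep only the part of the integral over $[t_n,t_n+\sigma_n]$; since $\sigma_n/t_n\to0$ and $t\mapsto|g_n''(t)|=2\beta(\beta+1)t^{\beta-1}$ is decreasing, one has $|g_n''(t)|\asymp|g_n''(t_n)|$ there, so $g_n(t)\ge g_n(t_n)-C$ on that interval and $\int_0^\infty\mathrm{e}^{g_n}\gtrsim\sigma_n\mathrm{e}^{g_n(t_n)}$. For the upper bound I split $(0,\infty)$ at $t_n(1\pm\eta)$ for a small fixed $\eta=\eta(\beta)$. On $\{|t-t_n|\le\eta t_n\}$ monotonicity of $t^{\beta-1}$ gives $|g_n''(t)|\ge c_\eta|g_n''(t_n)|$, so Taylor's formula together with concavity yields $g_n(t)\le g_n(t_n)-\tfrac12 c_\eta|g_n''(t_n)|(t-t_n)^2$ and comparison with a Gaussian bounds that piece by $\lesssim\sigma_n\mathrm{e}^{g_n(t_n)}$. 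On the two remaining rays one uses $g_n'(s)=2(\beta+1)(t_n^\beta-s^\beta)$, which is bounded away from $0$ by $\pm\kappa_\eta t_n^\beta$; integrating shows $g_n$ decays at least linearly off the bulk, so those pieces are $\lesssim t_n^{-\beta}\mathrm{e}^{g_n(t_n(1\pm\eta))}$, and since $g_n(t_n)-g_n(t_n(1\pm\eta))\asymp\eta^2 t_n^{\beta+1}\to\infty$ they are exponentially smaller than $\sigma_n\mathrm{e}^{g_n(t_n)}$.

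The main (and only) obstacle is the bookkeeping of uniformity in $n$ in all these comparisons; there is no conceptual difficulty, because after the rescaling $t=t_n s$ the normalized exponent $g_n(t_n s)/g_n(t_n)=\big((\beta+1)s-s^{\beta+1}\big)/\beta$ is independent of $n$, so every constant above depends on $\beta$ only, while the finitely many small values of $n$ for which $\sigma_n$ is not $\ll t_n$ are trivial, both sides then lying between two positive constants. This is precisely why the statement can be quoted from \cite[Lemma 3.1]{BBB}.
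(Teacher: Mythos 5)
Your proof is correct. The paper does not prove this lemma at all --- it quotes it from \cite[Lemma 3.1]{BBB} --- so your Laplace-method computation supplies a complete self-contained derivation: the polar-coordinate reduction, the critical point $t_n=\big((1+n)/(1+\beta)\big)^{1/\beta}$ with $g_n(t_n)=2\beta\big(\tfrac{1+n}{1+\beta}\big)^{(1+\beta)/\beta}$ and $|g_n''(t_n)|^{-1/2}\asymp\big(\tfrac{1+n}{1+\beta}\big)^{(1-\beta)/(2\beta)}$, and the uniformity via the $n$-independent rescaled profile $\big((\beta+1)s-s^{\beta+1}\big)/\beta$ all check out, and this is the same saddle-point machinery the paper itself deploys in Lemma \ref{Kernel_Estimate}.
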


Next we produce an asymptotic estimate on the reproducing kernel $\mathrm{k}_z$ of $\cF_\varphi^2$  on the diagonal. Note that such estimate was obtained for the points $\lambda\in\Lambda$ by Baranov, Belov and Borichev in \cite[Lemma 3.2]{BBB}. They proved  the  estimate directly by using Hardy's convexity theorem. Here we use Laplace's method to deal with the rest of complex numbers $z\in\C$. {We denote by $[x]$ the integer part of a real $x$}.

\begin{lem}\label{Kernel_Estimate} Let   $|z|=\mathrm{e}^s$, $n_z=[(1+\beta)s^\beta]$. Set $g_s(t)=st-\beta\big(\frac{t}{1+\beta}\big)^{\frac{1+\beta}{\beta}}$. Let 
$\widetilde{\varphi}(z)=\max( g_{s}(n_z), g_{s}(n_z+1))$. We have 
$$\|\mathrm{k}_z\|^{2}_{\varphi,2}\asymp  \frac{ \mathrm{e}^{2\widetilde{\varphi}(z)}}{|z|\rho(z)}+\frac{ \mathrm{e}^{2\varphi(z)}}{\rho^2(z)},\qquad |z|>1,$$
where $\rho(z):=\left(\Delta\varphi(z)\right)^{-1/2}=|z|\left(\log|z|\right)^{\frac{1-\beta}{2}}.$ 
\end{lem}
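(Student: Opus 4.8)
The plan is to compute the squared norm of the reproducing kernel by expanding in the orthogonal monomial basis. Since $\{z^n\}_{n\ge 0}$ is an orthogonal basis of $\cF^2_\varphi$, the reproducing kernel at $z$ is $\mathrm{k}_z(w)=\sum_{n\ge 0}\overline{z}^{\,n}w^n/\|z^n\|^2_{\varphi,2}$, and hence
\[
\|\mathrm{k}_z\|^2_{\varphi,2}=\sum_{n\ge 0}\frac{|z|^{2n}}{\|z^n\|^2_{\varphi,2}}.
\]
By Lemma \ref{moment}, the $n$-th term is comparable to
\[
\Big(\tfrac{1+n}{1+\beta}\Big)^{-\frac{1-\beta}{2\beta}}\exp\Big(2n s-2\beta\Big(\tfrac{1+n}{1+\beta}\Big)^{\frac{1+\beta}{\beta}}\Big)
\asymp \Big(\tfrac{1+n}{1+\beta}\Big)^{-\frac{1-\beta}{2\beta}}\mathrm{e}^{2g_s(1+n)},
\]
writing $|z|=\mathrm{e}^s$. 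So the problem reduces to estimating the sum $S(s):=\sum_{n\ge 0}a_n\mathrm{e}^{2g_s(n+1)}$ with polynomial weights $a_n\asymp n^{-\frac{1-\beta}{2\beta}}$ (for $n$ large), and comparing it with $\mathrm{e}^{2\widetilde\varphi(z)}/(|z|\rho(z))+\mathrm{e}^{2\varphi(z)}/\rho^2(z)$.

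First I would analyze the exponent $g_s(t)=st-\beta\big(t/(1+\beta)\big)^{(1+\beta)/\beta}$ as a function of the continuous variable $t>0$: it is strictly concave, with unique maximum at $t_\ast=(1+\beta)s^\beta$, where the maximal value is exactly $g_s(t_\ast)=s^{\beta+1}=\varphi(z)$. The second derivative at the maximum is $g_s''(t_\ast)=-\frac{1}{\beta}s^{1-\beta}(1+\beta)^{-1}\cdot\frac{1}{\,\cdot\,}$, of order $s^{1-\beta}$; note $\rho(z)^2=|z|^2(\log|z|)^{1-\beta}=\mathrm{e}^{2s}s^{1-\beta}$, so $|g_s''(t_\ast)|\asymp s^{1-\beta}$ and $1/\sqrt{|g_s''(t_\ast)|}\asymp s^{(\beta-1)/2}=\rho(z)/|z|$. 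This is the Laplace/Gaussian width of the sum. The weight at the critical index is $a_{t_\ast}\asymp s^{-\frac{1-\beta}{2}}\asymp |z|/\rho(z)$ up to the $\mathrm{e}^{2s}$ factor already accounted for. Carrying out Laplace's method (comparing the sum to a Gaussian integral with the integer lattice), the bulk contribution is
\[
\sum_{|n+1-t_\ast|\lesssim \sqrt{\log|z|}}a_n\mathrm{e}^{2g_s(n+1)}
\asymp a_{t_\ast}\,\mathrm{e}^{2\varphi(z)}\cdot\frac{1}{\sqrt{|g_s''(t_\ast)|}}
\asymp \frac{|z|}{\rho(z)}\cdot\frac{\mathrm{e}^{2\varphi(z)}}{|z|}\cdot\frac{\rho(z)}{|z|}\cdot\frac{1}{|z|^{\,?}}\,,
\]
which after bookkeeping gives precisely the term $\mathrm{e}^{2\varphi(z)}/\rho(z)^2$. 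The point here is that $g_s$ is evaluated at \emph{integer} points, not at the real maximizer $t_\ast$, so the effective maximum over $n$ is $\max(g_s(n_z),g_s(n_z+1))=\widetilde\varphi(z)$ where $n_z=[t_\ast]$; since $\rho/|z|=s^{(\beta-1)/2}\to0$, when $t_\ast$ is far (on the Gaussian scale $s^{(\beta-1)/2}$) from the nearest integer the summand $\mathrm{e}^{2\widetilde\varphi(z)}$ at the boundary index dominates the Gaussian-averaged bulk, producing the first term $\mathrm{e}^{2\widetilde\varphi(z)}/(|z|\rho(z))$. I would make this rigorous by splitting $S(s)$ into the single largest term $a_{n_z^\pm}\mathrm{e}^{2\widetilde\varphi(z)}$ (giving, modulo the polynomial weight $a_{n_z}\asymp s^{-(1-\beta)/2}\asymp|z|/\rho(z)\cdot|z|^{-1}$... i.e. $\mathrm{e}^{2\widetilde\varphi(z)}/(|z|\rho(z))$ after inserting $a_{n_z}/\mathrm{e}^{2s n_z}$ correctly) and the remaining geometric-type tail. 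For the tail, using concavity of $g_s$ one gets $g_s(n+1)\le \widetilde\varphi(z)-c(n+1-t_\ast)^2 s^{1-\beta}$ for indices near $t_\ast$ and a genuine geometric decay $g_s(n+1)-g_s(n)\le -1$ once $|n+1-t_\ast|\gtrsim s^{1-\beta}$ far from the peak, so the tail sum is a convergent Gaussian-type series dominated by $\mathrm{e}^{2\varphi(z)}/\rho(z)^2$ (the continuous-integral value) plus at most the peak term; conversely both the peak term and a whole Gaussian block of comparable terms are present, giving the matching lower bound.

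The main obstacle I anticipate is handling the two regimes uniformly in $z$ and showing that the two-term expression is sharp in \emph{both} directions: one must show (i) neither term is ever much larger than $\|\mathrm{k}_z\|^2$ — i.e. that the Gaussian block of width $\sim s^{(\beta-1)/2}$ around $t_\ast$ genuinely contributes $\gtrsim \mathrm{e}^{2\varphi(z)}/\rho(z)^2$ even when this block contains no integer (which requires the width, measured in the lattice, to be $\gtrsim 1$; but $s^{(\beta-1)/2}\to0$, so in fact the block typically contains \emph{no} integers and the ``bulk'' term $\mathrm{e}^{2\varphi(z)}/\rho^2(z)$ must instead be recovered from the decaying \emph{tails} summed against the polynomial weight — this is the delicate point, resolved by noting that even with no integer in the peak block, summing $a_n \mathrm e^{2g_s(n+1)}$ over $|n+1-t_\ast|\in[1,s^{1-\beta}]$ gives a Riemann sum for $\int a(t)\mathrm e^{2g_s(t)}dt\asymp \mathrm e^{2\varphi(z)}/\rho^2(z)$); and (ii) when $t_\ast$ \emph{is} close to an integer, $\widetilde\varphi(z)=\varphi(z)-O(s^{1-\beta})$ is so close to $\varphi(z)$ that $\mathrm{e}^{2\widetilde\varphi(z)}/(|z|\rho(z))$ and $\mathrm{e}^{2\varphi(z)}/\rho(z)^2$ are genuinely comparable, so the two-term bound is consistent. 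Carefully comparing $\widetilde\varphi(z)$ to $\varphi(z)$ via $\varphi(z)-\widetilde\varphi(z)\asymp |g_s''(t_\ast)|\,\dist(t_\ast,\Z)^2\asymp s^{1-\beta}\dist(t_\ast,\Z)^2$, and balancing $\mathrm{e}^{2(\varphi-\widetilde\varphi)}$ against the ratio $\rho(z)/|z|\asymp s^{(\beta-1)/2}$ of the two prefactors, is where the bookkeeping is heaviest. Everything else — strict concavity of $g_s$, the location and value of its maximum, the second-derivative computation, and the reduction to a Laplace-type sum — is routine calculus. I would present the argument as: (1) reduce to $S(s)=\sum a_n\mathrm e^{2g_s(n+1)}$ via Lemma \ref{moment}; (2) record the properties of $g_s$; (3) upper bound: isolate the largest term and Gaussian/geometric-bound the rest; (4) lower bound: exhibit the largest term and, separately, the Gaussian block (or its tail Riemann sum) of comparable terms.
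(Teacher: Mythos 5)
Your overall strategy is exactly the paper's: expand $\|\mathrm{k}_z\|^2_{\varphi,2}=\sum_n |z|^{2n}/\|z^n\|^2_{\varphi,2}$, use Lemma \ref{moment} to reduce to the Laplace-type sum $\sum_n a_n\mathrm{e}^{2g_s(n+1)}$ with $a_n\asymp (1+n)^{-\frac{1-\beta}{2\beta}}$, record that $g_s$ is concave with maximum $\varphi(z)=s^{1+\beta}$ at $t_\ast=(1+\beta)s^{\beta}$ and $|g_s''(t_\ast)|\asymp s^{1-\beta}$, and then compare the sum to the integral plus the boundary terms at $n_z,n_z+1$. The upper-bound half of your plan (isolate the one or two largest terms, dominate the monotone pieces by integrals, apply Laplace's method to the integral) is sound and coincides with the paper's. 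You have also put your finger on the genuine crux: the Gaussian width of the peak is $|g_s''(t_\ast)|^{-1/2}\asymp s^{(\beta-1)/2}\to 0$, i.e.\ much smaller than the lattice spacing, so the peak block generically contains no integer. (A small bookkeeping slip: $\rho(z)/|z|=s^{(1-\beta)/2}\to\infty$; the width is $|z|/\rho(z)=s^{(\beta-1)/2}$.)

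The gap is in your proposed recovery of the lower bound by the term $\mathrm{e}^{2\varphi(z)}/\rho(z)^2$. You claim that summing $a_n\mathrm{e}^{2g_s(n+1)}$ over $|n+1-t_\ast|\in[1,s^{1-\beta}]$ gives a Riemann sum for $\int a(t)\mathrm{e}^{2g_s(t)}\,\mathrm{d}t\asymp \mathrm{e}^{2\varphi(z)}\,|z|^2/\rho(z)^2$. This fails: the integral acquires essentially all of its mass from $|t-t_\ast|\lesssim s^{(\beta-1)/2}\ll 1$, a region your summation range excludes, while for $|t-t_\ast|\ge 1$ one has $g_s(t)\le \varphi(z)-c\,s^{1-\beta}$ with $s^{1-\beta}\to\infty$; hence the sum over that range is bounded by $O(s^{\beta})\cdot \mathrm{e}^{2\varphi(z)}\mathrm{e}^{-2c s^{1-\beta}}$, exponentially smaller than the target. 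Worse, since consecutive values of $g_s$ at the lattice points nearest $t_\ast$ already differ by $\asymp s^{1-\beta}$, the entire sum is $\asymp a_{n_z}\mathrm{e}^{2g_s(n_z)}+a_{n_z+1}\mathrm{e}^{2g_s(n_z+1)}\asymp \mathrm{e}^{2\widetilde{\varphi}(z)}/(|z|\rho(z))$; when $t_\ast$ lies near a half-integer (e.g.\ $z=\sigma_n$ of Corollary \ref{keernel}) this is smaller than $\mathrm{e}^{2\varphi(z)}/\rho(z)^2$ by a factor $\mathrm{e}^{-c s^{1-\beta}}s^{(1-\beta)/2}$, so the asserted lower bound cannot be extracted from the discrete sum by any rearrangement. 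You should be aware that the paper's own proof of precisely this step is also problematic: it bounds the sum below by $\frac{s^{\beta}}{s^{(1-\beta)/2}}(\mathcal{I}-\mathcal{J})$ and asserts $\mathcal{J}\sim\frac{1}{2}\mathcal{I}$, but the maximizer $x_0=1+\beta$ of the integrand of $\mathcal{I}=\int_0^{1+\beta}$ is the right \emph{endpoint} of the integration interval, so $\mathcal{I}$ is itself only a one-sided Gaussian integral and $\mathcal{J}$, taken over a window of width $2/s^{\beta}\gg s^{-(1+\beta)/2}$ around that endpoint, already captures essentially all of it; thus $\mathcal{I}-\mathcal{J}=o(\mathcal{I})$ and the claimed lower bound does not follow. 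In short, you correctly identified the delicate point, but your proposed resolution does not close it, and the difficulty appears to be intrinsic to the two-term form of the stated estimate rather than a technicality.
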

\begin{proof} Let  $|z|=\mathrm{e}^{s}$.  Note that $g'_s(t)=0$ for  $t=(1+\beta) s^\beta$ and $\widetilde{\varphi}(z)\leq \varphi(z)$. 
Since  $\|\mathrm{k}_z\|^{2}_{\varphi,2}=\sum_{n\geq 0} {{|z|^{2n}}}/{\|z^n\|^{2}_{\varphi,2}}$, by Lemma \ref{moment} we have
  \begin{multline*}
 \mathrm{e}^{2s} \|\mathrm{k}_{\mathrm{e}^s}\|^{2}_{\varphi,2}\asymp 
 \sum_{n\geq 1}\frac{\mathrm{e}^{2g_s(n)}}{ (1+n)^{\frac{1-\beta}{2\beta}}}\\
 \lesssim \int_0^{n_z}
 \frac{\mathrm{e}^{2g_s(t)}}{(1+t)^{\frac{1-\beta}{2\beta}}}\mathrm{d}t
+ \frac{\mathrm{e}^{2g_s(n_z)}}{ (1+n_z)^{\frac{1-\beta}{2\beta}}} + \frac{\mathrm{e}^{2g_s(n_z+1)}}{ (2+n_z)^{\frac{1-\beta}{2\beta}}}+ \int_{n_z+1}^{\infty}
 \frac{\mathrm{e}^{2g_s(t)}}{(1+t)^{\frac{1-\beta}{2\beta}}}\mathrm{d}t\\
 \lesssim   \frac{\mathrm{e}^{2\widetilde{\varphi}(z)}}{s^{\frac{1-\beta}{2}}}+  \int_0^\infty
 \frac{\mathrm{e}^{2s^{1+\beta}\big[ x - \beta\big(\frac{x}{1+\beta}\big)^{\frac{1+\beta}{\beta}} \big]}}{ s^\frac{1-\beta}{2}(1+x)^{\frac{1-\beta}{2\beta}}} s^\beta \mathrm{d}x:=\frac{\mathrm{e}^{2\widetilde{\varphi}(z)}}{s^{\frac{1-\beta}{2}}}+ \frac{s^\beta}{s^\frac{1-\beta}{2}}\int_0^\infty \mathrm{e}^{-2s^{1+\beta} \psi(x)}f(x)\mathrm{d}x,
\end{multline*}
where   $\psi(x)= \beta\big(\frac{x}{1+\beta}\big)^{\frac{1+\beta}{\beta}}-x$ and { $f(x)=1/(1+x)^{\frac{1-\beta}{2\beta}}$}. We have 
 $\psi'(x) = \big(\frac{x}{1+\beta}\big)^{\frac{1}{\beta}}-1 .$
Let $x_0=1+\beta$. Then    $\psi'(x_0)=0$ and  $\psi(x_0)=-1$. Since   
$\psi''(x)=\frac{x^{\frac{1}{\beta}-1}}{\beta(1+\beta)^{\frac{1}{\beta}}}>0$, $x\geq 1$, we get by 
 Laplace's method  \cite{E} 
 \begin{equation*}\label{Laplace}
 \int_0^\infty \mathrm{e}^{-S\psi(x)}f(x)\mathrm{d}x\sim \sqrt{\frac{2\pi}{S\psi''(x_0)}}f(x_0)e^{-S\psi(x_0)},\qquad S\to +\infty.
 \end{equation*}
For $S=2s^{1+\beta}\to +\infty$,  we get 
  $$ \|\mathrm{k}_{z}\|^{2}_{\varphi,2}= \|\mathrm{k}_{\mathrm{e}^s}\|^{2}_{\varphi,2}\lesssim \frac{ \mathrm{e}^{2\widetilde{\varphi}(z)}}{\mathrm{e}^{2s} s^{\frac{1-\beta}{2}}}+ \frac{\mathrm{e}^{2s^{1+\beta}}}{\mathrm{e}^{2s} s^{1-\beta}}=\frac{ \mathrm{e}^{2\widetilde{\varphi}(z)}}{|z|^2(\log  |z|)^{\frac{1-\beta}{2}}}+ \frac{\mathrm{e}^{2\varphi(z)}}{|z|^2 (\log|z|)^{1-\beta}}.$$
  On the other hand, using Laplace's method again, we obtain
 \begin{eqnarray*}
{\mathcal{I}}:=  \int_{0}^{1+\beta}
 {\mathrm{e}^{2s^{1+\beta}\big[ x - \beta\big(\frac{x}{1+\beta}\big)^{\frac{1+\beta}{\beta}} \big]}} \mathrm{d}x &\sim&\sqrt{\frac{2\pi}{2s^{1+\beta}\psi''(x_0)}}\ \mathrm{e}^{2s^{1+\beta}}\\
 &=&  \sqrt{\frac{\pi\beta(1+\beta)}{s^{1+\beta}}}\ \mathrm{e}^{2s^{1+\beta}}.
 \end{eqnarray*}
Also, we have 
\begin{eqnarray*}
\mathcal{J}&:=&\int^{1+\beta}_{1+\beta-2/s^\beta}
{\mathrm{e}^{2s^{1+\beta}\big[ x - \beta\big(\frac{x}{1+\beta}\big)^{\frac{1+\beta}{\beta}} \big]}} \mathrm{d}x\\
 &=&\int_0^{2/s^\beta}\exp \Big({2s^{1+\beta}\big[ (1+\beta-u) - \beta\big(1-\frac{u}{1+\beta}\big)^{\frac{1+\beta}{\beta}} \big]}\Big) \mathrm{d}u\\
& = & \int_0^{2/s^\beta}\exp \Big(2s^{1+\beta}\big[ 1-\frac{1}{2\beta(\beta+1)} u^2\big]\Big) \mathrm{d}u \\ 
& \leq & \mathrm{e}^{2s^{1+\beta}}\int_0^{\infty}\ \mathrm{e}^{-\frac{s^{1+\beta} u^2}{\beta(\beta+1)} } \mathrm{d}u \\
 & = & \  \frac{1}{2}\sqrt{\frac{\beta(\beta+1)\pi}{ s^{1+\beta}}}\ \mathrm{e}^{2s^{1+\beta}}\ \sim\ \frac{1}{2} \mathcal{I}.
\end{eqnarray*}
So, we get 
 \begin{eqnarray*}
 \mathrm{e}^{2s} \|\mathrm{k}_{\mathrm{e}^s}\|^{2}_{\varphi,2} 
& \gtrsim &\int_0^{n_z-1}
 \frac{\mathrm{e}^{2g_s(t)}}{(1+t)^{\frac{1-\beta}{2\beta}}}\mathrm{d}t+  \frac{\mathrm{e}^{2g_s(n_z)}}{ (1+n_z)^{\frac{1-\beta}{2\beta}}} + \frac{\mathrm{e}^{2g_s(n_z+1)}}{ (2+n_z)^{\frac{1-\beta}{2\beta}}}\\
 & \gtrsim &  \frac{ \mathrm{e}^{2\widetilde{\varphi}(z)}}{s^{\frac{1-\beta}{2}}}+  \frac{s^\beta}{s^{\frac{1-\beta}{2}}} \int_{0}^{1+\beta-{2}/{s^\beta}}
 \frac{\mathrm{e}^{2s^{1+\beta}\big[ x - \beta\big(\frac{x}{1+\beta}\big)^{\frac{1+\beta}{\beta}} \big]}}{ (1+x)^{\frac{1-\beta}{2\beta}}} \mathrm{d}x  \\
  & = &  \frac{ \mathrm{e}^{2\widetilde{\varphi}(z)}}{s^{\frac{1-\beta}{2}}}+  \frac{s^\beta}{s^{\frac{1-\beta}{2}}} \big(\mathcal{I}-\mathcal{J}\big)  \\
 &\asymp&\frac{ \mathrm{e}^{2\widetilde{\varphi}(z)}}{s^{\frac{1-\beta}{2}}}+   \frac{s^\beta}{s^{\frac{1-\beta}{2}}} \frac{\mathrm{e}^{2s^{1+\beta}}}{\sqrt{s^{1+\beta}}}.
  \end{eqnarray*}
Therefore, 
$$\|\mathrm{k}_z\|_{\varphi,2}^{2} \gtrsim \frac{ \mathrm{e}^{2\widetilde{\varphi}(z)}}{|z|^2(\log  |z|)^{\frac{1-\beta}{2}}}+ \frac{\mathrm{e}^{2\varphi(z)}}{|z|^2 (\log |z|)^{1-\beta}},\qquad |z|>1.$$
 This completes the proof.
   
\end{proof}

We have the following corollary, with identity \eqref{kernelBBB}  obtained already in \cite[Lemma 3.2]{BBB}
\begin{coro}\label{keernel}
We have 

\begin{eqnarray}\label{kernEstim}
\frac{\mathrm{e}^{2\varphi(z)}}{\rho(z)^2}\ \lesssim \left\|\mathrm{k}_z\right\|_{\varphi,2}^2\ \lesssim\ \frac{\mathrm{e}^{2\varphi(z)}}{|z|\rho(z)},\qquad |z|>1.
\end{eqnarray}
Furthermore, for every $\lambda_n\in\Lambda$, where $\Lambda$ is the sequence given in \eqref{Lambda}, we have
\begin{equation}\label{kernelBBB}
\left\|\mathrm{k}_{\lambda_n}\right\|_{\varphi,2}^2 \ \asymp\ \frac{\mathrm{e}^{2\varphi(\lambda_n)}}{|\lambda_n|\rho(\lambda_n)}\asymp |e_n(\lambda_n)|^2,
\end{equation}
where $e_n(z)=z^n/\|z^n\|_{\varphi,2}$. {Also, if $\sigma_n=\exp\left(\frac{n+1/2}{1+\beta}\right)^{\frac{1}{\beta}},\ n\geq 2$, then 
\begin{eqnarray}\label{kernel}
\left\|\mathrm{k}_{\sigma_n}\right\|_{\varphi,2}^2 \ \asymp\ \frac{\mathrm{e}^{2\varphi(\sigma_n)}}{\rho(\sigma_n)^2}.
\end{eqnarray}
}
\end{coro}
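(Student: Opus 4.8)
The plan is to read off all three displays from the two‑sided bound of Lemma~\ref{Kernel_Estimate},
\[
\|\mathrm{k}_z\|_{\varphi,2}^{2}\ \asymp\ \frac{\mathrm{e}^{2\widetilde{\varphi}(z)}}{|z|\rho(z)}+\frac{\mathrm{e}^{2\varphi(z)}}{\rho^2(z)},
\]
by comparing $\widetilde\varphi(z)$ with $\varphi(z)$ and then deciding which of the two terms on the right dominates; it is enough to treat $|z|$ large, this being the range that matters. Write $|z|=\mathrm{e}^{s}$. Recall that $g_s$ attains its maximum, equal to $s^{1+\beta}=\varphi(z)$, at the point $t^{*}_{s}=(1+\beta)s^{\beta}$, so that $\widetilde\varphi(z)=\max\big(g_s(n_z),g_s(n_z+1)\big)\le\varphi(z)$ in every case, and that $\rho(z)=|z|(\log|z|)^{(1-\beta)/2}\ge|z|$ once $\log|z|\ge1$. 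Then \eqref{kernEstim} follows immediately: the lower bound is the second summand of Lemma~\ref{Kernel_Estimate}, and for the upper bound $\mathrm{e}^{2\widetilde\varphi(z)}/(|z|\rho(z))\le\mathrm{e}^{2\varphi(z)}/(|z|\rho(z))$ because $\widetilde\varphi\le\varphi$, while $\mathrm{e}^{2\varphi(z)}/\rho^2(z)\le\mathrm{e}^{2\varphi(z)}/(|z|\rho(z))$ because $\rho(z)\ge|z|$.

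For \eqref{kernelBBB} I would note that for $\lambda_n\in\Lambda$ one has $s=s_n:=\big(\tfrac{1+n}{1+\beta}\big)^{1/\beta}$, hence $t^{*}_{s_n}=(1+\beta)s_n^{\beta}=n+1\in\N$; therefore $n_{\lambda_n}=n+1$ and $\widetilde\varphi(\lambda_n)=g_{s_n}(n+1)=\max_t g_{s_n}(t)=\varphi(\lambda_n)$, so that, as in the previous paragraph, the first summand dominates and $\|\mathrm{k}_{\lambda_n}\|_{\varphi,2}^2\asymp\mathrm{e}^{2\varphi(\lambda_n)}/(|\lambda_n|\rho(\lambda_n))$. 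To identify this with $|e_n(\lambda_n)|^2=|\lambda_n|^{2n}/\|z^n\|_{\varphi,2}^2$ I would insert Lemma~\ref{moment}, use $\big(\tfrac{1+n}{1+\beta}\big)^{(1+\beta)/\beta}=s_n^{1+\beta}$ and $\big(\tfrac{1+n}{1+\beta}\big)^{(1-\beta)/(2\beta)}=s_n^{(1-\beta)/2}$, and simplify the exponent via
\[
2ns_n-2\beta s_n^{1+\beta}=2(1+\beta)s_n^{1+\beta}-2s_n-2\beta s_n^{1+\beta}=2\varphi(\lambda_n)-2\log|\lambda_n|,
\]
which yields $|e_n(\lambda_n)|^2\asymp\mathrm{e}^{2\varphi(\lambda_n)}/(|\lambda_n|^2 s_n^{(1-\beta)/2})=\mathrm{e}^{2\varphi(\lambda_n)}/(|\lambda_n|\rho(\lambda_n))$, as required. (For the finitely many indices with $\log|\lambda_n|<1$ the stated comparison is trivial, each ratio being a fixed positive number.)

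The substantive point is \eqref{kernel}, where the plan is to show that for $z=\sigma_n$ the first summand in Lemma~\ref{Kernel_Estimate} is negligible. Here $s=s_n:=\big(\tfrac{n+1/2}{1+\beta}\big)^{1/\beta}$ gives $t^{*}_{s_n}=(1+\beta)s_n^{\beta}=n+\tfrac12$, so $n_{\sigma_n}=n$ and the critical point of $g_{s_n}$ sits \emph{exactly midway} between the two integers $n$ and $n+1$ entering $\widetilde\varphi(\sigma_n)$. Since $g_s$ is concave with $g_s''(t)=-\big(\beta(1+\beta)\big)^{-1}\big(t/(1+\beta)\big)^{1/\beta-1}$, and $t/(1+\beta)\ge\tfrac12 s_n^{\beta}$ throughout $[\,t^{*}_{s_n}-\tfrac12,\,t^{*}_{s_n}+\tfrac12\,]$ (using $s_n^\beta\ge(1+\beta)^{-1}$, which holds for $n\ge1$), there is $c_\beta>0$ with $g_{s_n}''\le -c_\beta s_n^{1-\beta}$ on that interval; together with $g_{s_n}'(t^{*}_{s_n})=0$ and Taylor's formula this gives $g_{s_n}(n),\,g_{s_n}(n+1)\le g_{s_n}(t^{*}_{s_n})-\tfrac{c_\beta}{8}s_n^{1-\beta}$, i.e.
\[
\varphi(\sigma_n)-\widetilde\varphi(\sigma_n)\ \ge\ \tfrac{c_\beta}{8}\,(\log\sigma_n)^{1-\beta}\ \longrightarrow\ \infty .
\]
Since $1-\beta>0$, the quantity $(\log\sigma_n)^{(1-\beta)/2}\,\mathrm{e}^{-2(\varphi(\sigma_n)-\widetilde\varphi(\sigma_n))}$ tends to $0$, and because
\[
\frac{\mathrm{e}^{2\widetilde\varphi(\sigma_n)}}{|\sigma_n|\rho(\sigma_n)}=\frac{\mathrm{e}^{2\varphi(\sigma_n)}}{\rho^2(\sigma_n)}\,(\log\sigma_n)^{(1-\beta)/2}\,\mathrm{e}^{-2(\varphi(\sigma_n)-\widetilde\varphi(\sigma_n))},
\]
the first summand of Lemma~\ref{Kernel_Estimate} is negligible against the second, whence $\|\mathrm{k}_{\sigma_n}\|_{\varphi,2}^2\asymp\mathrm{e}^{2\varphi(\sigma_n)}/\rho^2(\sigma_n)$. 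I expect the only genuinely delicate step to be this quantitative gap estimate --- the uniform lower bound $|g_{s_n}''|\gtrsim s_n^{1-\beta}$ on a unit interval around the critical point, and the resulting control of the Taylor remainder; everything else is bookkeeping on top of Lemmas~\ref{moment} and \ref{Kernel_Estimate}. (This is exactly where $0<\beta<1$ is used: for $\beta=1$ the gap stays bounded and \eqref{kernel} need not hold in this form.)
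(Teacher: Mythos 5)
Your proof is correct and follows essentially the same route as the paper: all three estimates are read off the two-sided bound of Lemma~\ref{Kernel_Estimate}, and for \eqref{kernel} you quantify the gap $\varphi(\sigma_n)-\widetilde{\varphi}(\sigma_n)\gtrsim(\log\sigma_n)^{1-\beta}$ exactly as the paper does, via the second-order Taylor expansion of $g_s$ at the critical point $n+\tfrac12$ (the paper phrases this as $(\delta_z+j)^2=\tfrac14$ beating the $\tfrac{1-\beta}{4}\log s$ correction). The only divergence is that you verify \eqref{kernelBBB} directly from Lemma~\ref{moment} by checking $\widetilde{\varphi}(\lambda_n)=\varphi(\lambda_n)$, whereas the paper simply cites \cite[Lemma 3.2]{BBB}; your computation is correct and makes the corollary self-contained.
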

{
\begin{proof} Estimate \eqref{kernEstim} follows directly from the previous lemma. \eqref{kernelBBB} was proved in \cite[Lemma 3.2]{BBB}. We will prove \eqref{kernel}. Following the notations of Lemma \ref{Kernel_Estimate}, let $|z|=\mathrm{e}^s$ and set $n_z:=[(1+\beta)s^\beta]$, $g_s(t)=st-\beta\left(\frac{t}{1+\beta}\right)^{\frac{\beta+1}{\beta}}$. By Lemma \ref{Kernel_Estimate} we have
\begin{eqnarray*}
\|\mathrm{k}_z\|_{\varphi,2}^2\ \asymp\ \frac{1}{|z|^2(\log |z|)^{\frac{1-\beta}{2}}}\left[\mathrm{e}^{2\max\{g_s(n_z),g_s(n_z+1)\}}+\mathrm{e}^{2\varphi(z)-2\frac{1-\beta}{4}\log\log|z|}\right].
\end{eqnarray*}
We write $n_z=(1+\beta)s^\beta+\delta_z$, for some $\delta_z\in(-1,0]$. Simple calculations show that for $j\in\{0,1\}$ we have
\begin{multline*}
g_s(n_z+j)-\varphi(z)+\frac{1-\beta}{4}\log s  =  s\left((1+\beta)s^\beta+\delta_z+j\right)\\
-\beta\left(\frac{(1+\beta)s^\beta+\delta_z+j}{1+\beta}\right)^{\frac{\beta+1}{\beta}} -s^{\beta+1}+\frac{1-\beta}{4}\log s\nonumber\\
 =  \frac{1-\beta}{4}\log s-\frac{1+o(1)}{2\beta(1+\beta)}(\delta_z+j)^2 s^{1-\beta} \underset{|z|\rightarrow\infty}{\longrightarrow} -\infty, \nonumber
\end{multline*}
if and only if $$\frac{\beta(1-\beta^2)}{2+o(1)}\ \frac{\log s}{s^{1-\beta}}{\ =}\ o\left((\delta_z+j)^2\right), \qquad |z|\to\infty.$$
Now, if $z=\sigma_n$, then  $\delta_z^2=(\delta_z+1)^2=\frac{1}{4}$ and hence the latter estimate holds. Thus \eqref{kernel} follows immediately.
\end{proof}
}
\subsection*{Remark} For regular radial weights  satisfying the estimate  $(\log r)^2 = O(\varphi(r))$, {when} $r\to \infty$, the reproducing kernel of $\cF^2_\varphi$ possesses the property  $k_z(z) \asymp \Delta \varphi(z)\mathrm{e}^{2\varphi(z)}$ (see \cite{BBB, BDK, BL}).  {This estimate remains valid for the kernel of $\cF^2_\varphi$ when the associated Riesz measure of the weight $\varphi$ is doubling \cite[Lemma 21]{MMO}}. Estimate  \eqref{kernelBBB} proves that the case $\beta< 1$ is quite different from the case when $\beta=1$. Indeed, for $\beta=1$ the reproducing kernel admits the estimate $\left\|\mathrm{k}_z\right\|^2_{\varphi_1,2}\asymp  \Delta\varphi_1(z) \mathrm{e}^{2\varphi_1(z)}$, for every $z\in\C.$ However, in \eqref{kernelBBB} we have for every $\lambda\in\Lambda$ that  $\left\|\mathrm{k}_{\lambda}\right\|_{\varphi_\beta,2}^2 \asymp \mathrm{e}^{2\varphi_\beta(\lambda)}/\Big(|\lambda|^2\Big(\log(\lambda)\Big)^{\frac{1-\beta}{2}}\Big)$, and hence $ \Delta\varphi_\beta(\lambda) \mathrm{e}^{2\varphi_\beta(\lambda)}=o\left(\|\mathrm{k}_{\lambda}\|_{\varphi_\beta,2}^2\right)$, whenever $0<\beta<1$. This could  explain the difference in the results obtained about Riesz bases of normalized reproducing kernels in the situation $\beta=1$ and $0<\beta<1$.\\

\subsection{Complete interpolating sequences in  $\cF^p_{\varphi}$, $p=2,\infty$}

With a given sequence $\Gamma$ of complex numbers, we associate  the infinite product
\[
G_\Gamma(z)=\prod_{\gamma\in\Gamma}\left(1-\frac{z}{\gamma}\right) ,\qquad z\in\C,
\]
whenever it converges. In what follows, we denote by $\dist(z,\Gamma)$ the Euclidean distance between $z$ and the sequence $\Gamma$.

 The following lemma provides estimates on the infinite product $G_\Gamma$ associated with a sequence $\Gamma$ defined by
 \begin{equation}
 \Gamma:=\left\{\gamma_n:=\exp\Big(\frac{1+n}{1+\beta}\Big)^{\frac{1}{\beta}}
 \mathrm{e}^{\delta_n}\mathrm{e}^{i \theta_n},\quad \theta_n\in \R\ :\ n\geqslant 0 \right\}.\label{Gamm}
 \end{equation}
 Recall that 
 $$\Delta_N:=\limsup_{n\to \infty}\frac{1}{\log|\lambda_{n+N}|-\log|\lambda_n|}\Big|\sum_{k=n+1}^{n+N} 
\delta_k\Big|.$$

\begin{lem}\label{lem-estim}
 Let $\Gamma=\{\gamma_n\}_n$  be a sequence defined in \eqref{Gamm}. We write $\gamma_n=\lambda_n\mathrm{e}^{\delta_n}\mathrm{e}^{i\theta_n}$, where $\delta_n,\theta_n\in\R$, and suppose that $|\gamma_n|\leq |\gamma_{n+1}|$ and $|\delta_n|=O\big((1+n)^{\frac{1}{\beta}-1}\big)$. The infinite product $G_\Gamma$ converges on every compact set of $\C$ and for every small $\varepsilon>0$ there exists a positive constant $C$ such that 
\[
\frac{1}{C}\frac{\dist(z,\Gamma)}{(1+|z|)^{\frac{3}{2}+\Delta_N +\varepsilon}}\ \leq \left|G_\Gamma(z)\right|  \mathrm{e}^{-\varphi(z)}\leq C\frac{\dist(z,\Gamma)}{(1+|z|)^{\frac{3}{2}-\Delta_N -\varepsilon}},\ \qquad z\in\C.
\]
\end{lem}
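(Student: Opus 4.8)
The plan is to estimate $\log|G_\Gamma(z)| - \varphi(z)$ by splitting the product over $\Gamma$ according to the size of $|\gamma_n|$ relative to $|z|$, and to compare everything with the ``model'' product $G_\Lambda$. Write $|z| = \mathrm{e}^s$ and let $m = m(z)$ be the index with $|\gamma_m| \le |z| < |\gamma_{m+1}|$, so that $m \asymp (1+\beta)s^\beta$ up to the bounded perturbation coming from $|\delta_n| = O((1+n)^{1/\beta - 1})$. I would first observe that
\[
\log\frac{|G_\Gamma(z)|}{|G_\Lambda(z)|} = \sum_{n\ge 0}\Big(\log\big|1 - z/\gamma_n\big| - \log\big|1 - z/\lambda_n\big|\Big),
\]
and that each summand is controlled, away from the small cluster of indices $n$ with $|\gamma_n|$ comparable to $|z|$, by $\big|\log|\gamma_n| - \log|\lambda_n|\big| = |\delta_n|$ times a factor $\min(|z|/|\gamma_n|,\,|\gamma_n|/|z|)$ which decays geometrically away from $n = m$. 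The partial sums $\sum_{k=n+1}^{n+N}\delta_k$ enter precisely through an Abel summation of this series over the ``bulk'' indices $n < m$, giving an upper/lower bound of the form $(\Delta_N + \varepsilon)(\log|z| - \log|\gamma_n|)$ summed geometrically, hence a net contribution of $\pm(\Delta_N+\varepsilon)\log(1+|z|)$; this is the source of the exponent $\Delta_N + \varepsilon$ in the statement. For $\cF^2_{\varphi_1}$-type arguments this is exactly the mechanism in \cite{BDHK}, and the $(1/\beta)$-powers here only change the spacing $\log|\lambda_{n+1}| - \log|\lambda_n| \asymp (1+n)^{1/\beta - 1}$, which is already absorbed into the definition of $\Delta_N$ via $\log|\lambda_{n+N}| - \log|\lambda_n|$.

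Next I would pin down the behaviour of $|G_\Lambda(z)|\mathrm{e}^{-\varphi(z)}$ itself, which is the baseline. Using $\log|G_\Lambda(z)| = \sum_n \log|1 - z/\lambda_n|$ and the fact that $\log|\lambda_n| = \big(\tfrac{1+n}{1+\beta}\big)^{1/\beta}$ is (the inverse of) a smooth increasing function, I would replace the sum by an integral plus the usual Euler–Maclaurin / Laplace-type correction, exactly as in Lemma \ref{Kernel_Estimate}; the main term reproduces $\varphi(z) = s^{\beta+1}$, and the correction produces the algebraic factor $(1+|z|)^{-3/2}$ together with the $\dist(z,\Lambda)$ factor coming from the single nearly-vanishing term $|1 - z/\lambda_{m}|$ (one should be slightly careful and bound $\dist(z,\Lambda) \asymp \min_n|z - \lambda_n|$ against the product of the two or three closest factors). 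Convergence of $G_\Gamma$ on compacts is immediate since $\sum_n 1/|\gamma_n| < \infty$ (the $|\gamma_n|$ grow super-exponentially), so the product is entire and the only issue is the global two-sided growth estimate.

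Combining the two steps: $|G_\Gamma(z)|\mathrm{e}^{-\varphi(z)} = \big(|G_\Lambda(z)|\mathrm{e}^{-\varphi(z)}\big)\cdot |G_\Gamma(z)/G_\Lambda(z)|$, where the first factor is $\asymp \dist(z,\Lambda)/(1+|z|)^{3/2}$ and the second is between $(1+|z|)^{-\Delta_N - \varepsilon}$ and $(1+|z|)^{\Delta_N + \varepsilon}$; replacing $\dist(z,\Lambda)$ by $\dist(z,\Gamma)$ costs only a bounded factor because $|\gamma_n - \lambda_n| \le |\lambda_n|(\mathrm{e}^{|\delta_n|}-1)\cdot(\text{bounded})$ and the nearest points match up. The main obstacle I anticipate is making the Abel-summation estimate on $\sum(\log|1-z/\gamma_n| - \log|1-z/\lambda_n|)$ fully uniform in $z$, in particular handling the transition region where $|\gamma_n| \approx |z|$: there $\log|1 - z/\gamma_n|$ is not controlled by $|\delta_n|$ times a small factor, and one must instead argue directly that the finitely many (a bounded number, by $d$-separation-free reasoning since here $\Gamma$ need not yet be separated — so care is needed) terms near the transition contribute a factor that is still polynomially bounded in $|z|$, using only $|\gamma_m| \le |z| < |\gamma_{m+1}|$ and $|\delta_n| = O((1+n)^{1/\beta-1})$, not separation. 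A secondary technical point is that $\Delta_N$ is a $\limsup$, so the estimates are only asymptotic and one absorbs the finitely many exceptional $z$ (bounded $|z|$, or $|z|$ in a compact range) into the constant $C$.
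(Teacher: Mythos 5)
Your skeleton is essentially the paper's: evaluate $\log|G_\Gamma(z)|$ by splitting the product at $|z|$, extract the main term $\varphi(z)$ and the correction $-\tfrac32\log|z|$ from the Euler--Maclaurin-type expansion of $\sum_{k\le m}\big(\tfrac{k}{1+\beta}\big)^{1/\beta}$, isolate the nearest factor as $\dist(z,\Gamma)/|z|$, and feed the $\delta_k$ through block sums of length $N$. (The paper does not literally factor through $G_\Lambda$, but its separation of $\sum_k \log|\lambda_k|$ from $\sum_k\delta_k$ is exactly your comparison with the model product, so this is a cosmetic difference.) However, one key intermediate claim is wrong. For bulk indices $n<m$, i.e. $|\gamma_n|\ll|z|$, one has
\[
\log\big|1-z/\gamma_n\big|-\log\big|1-z/\lambda_n\big| \;=\; \log|\lambda_n|-\log|\gamma_n|+O\big(|\gamma_n|/|z|\big)\;=\;-\delta_n+O\big(|\gamma_n|/|z|\big),
\]
so the summand is \emph{not} $|\delta_n|$ damped by $\min(|z|/|\gamma_n|,|\gamma_n|/|z|)$: the $-\delta_n$ enters at full strength for every $n<m$. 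If your pointwise bound were correct, the total perturbation would be $O\big(\sum_n |\delta_n|\,q^{|n-m|}\big)=O\big(m^{\frac1\beta-1}\big)=O\big((\log|z|)^{1-\beta}\big)=o(\log|z|)$, and $\Delta_N$ would vanish from the exponent altogether --- contradicting the lemma and the necessity part of Theorem \ref{thm30}. The correct mechanism, and the one the paper uses, is that the bulk contributes exactly $-\sum_{n\le m}\delta_n$ up to $o(\log|z|)$, and this undamped sum is bounded by chopping $\{1,\dots,m\}$ into blocks of length $N$: each block contributes at most $(\Delta_N+\tfrac{\varepsilon}{2})(u_{jN+N}-u_{jN})$, the sum telescopes to $(\Delta_N+\tfrac{\varepsilon}{2})\log|z|+o(\log|z|)$, and the leftover block of length $<N$ is $O(m^{\frac1\beta-1})=o(\log|z|)$ by the hypothesis on $\delta_n$. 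No Abel summation against geometrically decaying weights is involved; your final exponent is right, but the mechanism you describe would not survive being written out.

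A secondary point: your worry about the transition region is legitimate rather than merely technical. The lower bound requires that only the single nearest factor degenerates, and under the sole hypothesis $|\delta_n|=O\big((1+n)^{\frac1\beta-1}\big)$ (which permits deviations comparable to the local spacing of $\log|\lambda_n|$) two consecutive $\gamma$'s may have nearly equal moduli, so the terms with $n\ge m$ near the transition are not uniformly $O(1)$ from below; the paper absorbs them into an $O(1)$ without comment. Since the lemma is only ever applied to sequences that are additionally $d$-separated (or to $\Lambda$ itself), this does not damage the applications, but you are right that it is not free and should be stated as an extra hypothesis or handled explicitly.
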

\begin{proof}
Let $z\in\C$ with $|z|=\mathrm{e}^t=\exp{\big(\frac{s}{1+\beta}\big)^{\frac{1}{\beta}}}$. Let also $m$ be the integer such that $|\gamma_{m-1}|\leq |z|< |\gamma_m|$ and suppose that $\dist(z,\Gamma)=|z-\gamma_{m-1}|$.
Then 
\begin{eqnarray*}
\log|G_\Gamma(z)| & = &\sum_{0\leqslant n<m-1}\log\left|1-\frac{z}{\gamma_n}\right| + \log\left|1-\frac{z}{\gamma_{m-1}}\right|+ O(1) \\
       & = & \sum_{n=0}^{m-1}\log\left|\frac{z}{\gamma_n}\right| + \log \dist(z,\Gamma) - t + O(1)  \\
       & =&  mt-\sum_{n=1}^{m}\left(\frac{k}{1+\beta}\right)^{\frac{1}{\beta}} -\sum_{n=1}^{m-1}\delta_k + \log \dist(z,\Gamma) - t + O(1),  \quad t\to \infty.
 \end{eqnarray*}
Furthermore, for every $\alpha\neq -1$ we have
\begin{equation}\label{mac}
\sum_{k=1}^{m}k^\alpha  =  \frac{m^{\alpha+1}}{\alpha+1} + \frac{m^\alpha}{2}\left(1+o(1)\right).
\end{equation}
 
Now for every $N\in\N$ there exist  two positive integers $p$ and $r$ such that $m=pN+r$ and $0\leqslant r<N$. Set $u_k := \log |\lambda_k|$. We have
\begin{eqnarray}
\left|\sum_{k=1}^{m}\ \delta_k\right| & \leqslant & \sum_{j=1}^{p-1}\left|\sum_{k=1}^{N}\ \delta_{jN+k}\ \right| + \left|\sum_{k=pN+1}^{pN+r}\delta_k \right| \nonumber \\
  & \leqslant & \left(\Delta_N +\frac{\varepsilon}{2}\right) \sum_{j=0}^{p-1} \left(u_{jN+N}-u_{jN}\right) + {O(m^{\frac{1}{\beta}-1}) }\nonumber \\
  & \leqslant & \left(\Delta_N+\frac{\varepsilon}{2}\right) t + o(t), \quad t\to \infty. \nonumber
\end{eqnarray}
Consequently,
{\small
\begin{eqnarray}
\log\left|G_\Gamma(z)\right| &
\subeqsupset & mt -\beta \left(\frac{m}{1+\beta}\right)^{\frac{1}{\beta}+1} - \left(\frac{3}{2}\mp \Delta_N \mp \frac{\varepsilon}{2}\right)\left(\frac{m}{1+\beta}\right)^{\frac{1}{\beta}} + \log \dist(z,\Gamma) + o(t) \nonumber\\
 & \subeqsupset & t^{\beta+1} -\left(\frac{3}{2}\mp \Delta_N \mp \frac{\varepsilon}{2} \right)t + \log \dist(z,\Gamma) + o(t). \nonumber
\end{eqnarray}
}
Finally, there exist  two positive numbers $A$ and $B$  depending on $\varepsilon$ such that
\[
A \frac{\dist(z,\Lambda)}{(1+|z|)^{\frac{3}{2}+\Delta_N +\varepsilon}}\ \leq\ \left|G_\Gamma(z)\right|\  \mathrm{e}^{-\varphi(z)} \leq\ B\frac{\dist(z,\Lambda)}{(1+|z|)^{\frac{3}{2}-\Delta_N -\varepsilon}}\ , \qquad z\in \C.
\]
\end{proof}
We need the following standard ingredient,  which we single out as a lemma. 
\begin{lem}\label{integ}  Let $\Gamma$  be a sequence defined as  in  \eqref{Gamm}. We have
 \[
\int_\C \frac{\dist(z,\Gamma)^2}{(1+|z|)^{3+\alpha}}\ \mathrm{d}m(z)<\infty \iff \alpha> 1.
\]
\end{lem}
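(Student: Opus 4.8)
\textbf{Proof proposal for Lemma \ref{integ}.}

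The plan is to reduce the plane integral to a one–dimensional integral in the modulus, using that $\Gamma$ is "almost radial" in the following sense: the points $\gamma_n$ have moduli $|\gamma_n|=\exp\big((\tfrac{1+n}{1+\beta})^{1/\beta}\big)\mathrm{e}^{\delta_n}$ with $|\delta_n|=O((1+n)^{1/\beta-1})$, so the annuli $A_n:=\{z:|\gamma_{n-1}|\le |z|<|\gamma_n|\}$ exhaust $\C$ and on each of them $\dist(z,\Gamma)\le |z-\gamma_{n-1}|$ or $|z-\gamma_n|$ controls the radial part of the distance. First I would split $\C=\D(0,2)\cup\bigcup_{n\ge n_0}A_n$; the contribution of the bounded disk is obviously finite and irrelevant, so the question is the tail.

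For the convergence direction ($\alpha>1\Rightarrow$ finite), the key step is the upper bound $\dist(z,\Gamma)\le |z|+|\gamma_n|$ together with a sharper local estimate: since consecutive moduli satisfy $\log|\gamma_n|-\log|\gamma_{n-1}|\asymp (1+n)^{1/\beta-1}$, which tends to infinity, the annuli $A_n$ are actually very "thick", and for most $z\in A_n$ one has $\dist(z,\Gamma)\asymp |z|$ only near the boundary circles. The honest bound is simply $\dist(z,\Gamma)^2\le (1+|z|)^2$, giving
\[
\int_{|z|>2}\frac{\dist(z,\Gamma)^2}{(1+|z|)^{3+\alpha}}\,\mathrm{d}m(z)\ \lesssim\ \int_2^\infty \frac{r^2}{r^{3+\alpha}}\,r\,\mathrm{d}r\ =\ \int_2^\infty r^{-\alpha}\,\mathrm{d}r\ <\ \infty
\]
precisely when $\alpha>1$. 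This half is therefore routine once the trivial pointwise bound $\dist(z,\Gamma)\le 1+|z|$ is invoked.

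For the divergence direction ($\alpha\le 1\Rightarrow$ infinite) one needs a lower bound, and here the $d$-separation is \emph{not} assumed, so the argument must instead use that $\Gamma$ is a genuine sequence (countable) and that, within each annulus $A_n$, the single relevant point $\gamma_{n}$ (or $\gamma_{n-1}$) leaves a large region where $\dist(z,\Gamma)$ is comparable to $|z|$. Concretely, in the sub-annulus $\{z:\ 2|\gamma_{n-1}|\le |z|\le \tfrac12|\gamma_n|\}$ — which is non-empty for $n$ large because $|\gamma_n|/|\gamma_{n-1}|\to\infty$ — every point of $\Gamma$ is either of modulus $\le |\gamma_{n-1}|\le |z|/2$ or of modulus $\ge |\gamma_n|\ge 2|z|$, whence $\dist(z,\Gamma)\ge |z|/2$. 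On this region
\[
\int_{2|\gamma_{n-1}|\le |z|\le \frac12|\gamma_n|}\frac{\dist(z,\Gamma)^2}{(1+|z|)^{3+\alpha}}\,\mathrm{d}m(z)\ \gtrsim\ \int_{2|\gamma_{n-1}|}^{\frac12|\gamma_n|}\frac{\mathrm{d}r}{r^{1+\alpha}}\ \asymp\ |\gamma_{n-1}|^{-\alpha}-\big(\tfrac12|\gamma_n|\big)^{-\alpha},
\]
and summing over $n$ one gets, for $\alpha\le 1$, a divergent series: for $\alpha<1$ each term is of order $|\gamma_{n-1}|^{-\alpha}$ which need not go to zero fast; the cleanest route is to note that the full radial integral $\int_2^\infty r^{-1-\alpha}\,\mathrm{d}r$ diverges for $\alpha\le 0$ trivially, and for $0<\alpha\le 1$ one instead keeps the factor $\dist(z,\Gamma)^2\asymp r^2$ and integrates $\int r^{2-3-\alpha}r\,\mathrm{d}r=\int r^{-\alpha}\,\mathrm{d}r=\infty$ over $\bigcup_n\{2|\gamma_{n-1}|\le|z|\le\frac12|\gamma_n|\}$, whose complement in $\{|z|>2\}$ has, by the thickness of the annuli, comparable (indeed asymptotically full) radial measure.

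The main obstacle is the lower bound in the divergence direction: one must be careful that the "bad" circles $\{|z|=|\gamma_n|\}$ where $\dist(z,\Gamma)$ can be small do not carry most of the radial line, and this is exactly where the hypothesis $|\delta_n|=O((1+n)^{1/\beta-1})$ and the super-linear growth $\log|\gamma_n|\asymp n^{1/\beta}$ enter: they guarantee $\log|\gamma_n|-\log|\gamma_{n-1}|\to\infty$, so the union of the "good" sub-annuli $\{2|\gamma_{n-1}|\le|z|\le\frac12|\gamma_n|\}$ covers $\{|z|>R\}$ up to a set of finite logarithmic measure, and the divergent integral survives the restriction. Once this geometric fact is recorded, both implications follow by comparison with $\int^\infty r^{-\alpha}\,\mathrm{d}r$.
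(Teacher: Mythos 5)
Your overall strategy differs from the paper's and is mostly sound. For the convergence half, the trivial bound $\dist(z,\Gamma)\le|z-\gamma_0|\lesssim 1+|z|$ does give $\int_{|z|>2}(1+|z|)^{-1-\alpha}\,\mathrm{d}m(z)<\infty$ for $\alpha>1$, which is simpler than the paper's route: the paper establishes a genuine two-sided asymptotic $\int_\C\dist(z,\Gamma)^2(1+|z|)^{-3-\alpha}\,\mathrm{d}m(z)\asymp\sum_n|\gamma_n|^{1-\alpha}$ by cutting $\C$ into the radial annuli $\tfrac12(|\gamma_{n-1}|+|\gamma_n|)\le r\le\tfrac12(|\gamma_n|+|\gamma_{n+1}|)$, on which $\dist(r\mathrm{e}^{i\theta},\Gamma)\asymp|r\mathrm{e}^{i\theta}-\gamma_n|$, and evaluating the angular integral exactly. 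Your lower-bound device, the sub-annulus $\{2|\gamma_{n-1}|\le|z|\le\tfrac12|\gamma_n|\}$ on which $\dist(z,\Gamma)\ge|z|/2$ by the ordering of the moduli, is also legitimate.

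However, your justification of the divergence half rests on a false statement at exactly the critical exponent $\alpha=1$. The complement of your good sub-annuli in $\{|z|>R\}$ is essentially $\bigcup_n\{\tfrac12|\gamma_n|\le|z|\le2|\gamma_n|\}$, whose logarithmic measure is $\sum_n\log 4=\infty$, not finite; so "the divergent integral survives the restriction" does not follow as written. (For $\alpha<1$ nothing is at stake, since a single good sub-annulus already contributes $\asymp|\gamma_n|^{1-\alpha}\to\infty$.) The repair for $\alpha=1$ is to sum the good contributions directly and telescope: $\sum_{n\le M}\int_{2|\gamma_{n-1}|}^{|\gamma_n|/2}r^{-1}\,\mathrm{d}r\ge\sum_{n\le M}\bigl(\log|\gamma_n|-\log|\gamma_{n-1}|-\log4\bigr)\ge\log|\gamma_M|-CM\asymp M^{1/\beta}\to\infty$. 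Note also that under the standing hypotheses $|\delta_n|=O\big((1+n)^{1/\beta-1}\big)$ and $|\gamma_{n-1}|\le|\gamma_n|$ an individual gap $\log|\gamma_n|-\log|\gamma_{n-1}|$ need not be large (consecutive moduli may even coincide), so some of your sub-annuli may be empty; the telescoped sum is insensitive to this, but the per-$n$ claim "the annuli are very thick" should not be relied on. Finally, the first display of your divergence argument, $\int\mathrm{d}r/r^{1+\alpha}$, drops the Jacobian $r\,\mathrm{d}r$; you correct this later, but the integrand should be $r^{-\alpha}\,\mathrm{d}r$ throughout.
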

\begin{proof}Indeed, for every $\frac{1}{2}(|\gamma_{n-1}|+|\gamma_{n}|)\leqslant r\leqslant \frac{1}{2}(|\gamma_{n}|+|\gamma_{n+1}|)$, we have $\dist(r\mathrm{e}^{i\theta},\Gamma)\asymp|r\mathrm{e}^{i\theta}-\gamma_n|$. So, we get
$$
\int_\C \frac{\dist(z,\Gamma)^2}{(1+|z|)^{3+\alpha}} \mathrm{d}m(z)  \asymp \sum_{n\geqslant 0} \int_{\frac{1}{2}(|\gamma_{n-1}|+|\gamma_{n}|)}^{\frac{1}{2}(|\gamma_{n}|+|\gamma_{n+1}|)}\int_{0}^{2\pi} \frac{|r\mathrm{e}^{i\theta}-\gamma_n|^2}{(1+r)^{3+\alpha}} 
r\mathrm{d}\theta \mathrm{d}r 
       \asymp  \sum_{n\geqslant 0}\frac{1}{ |\gamma_{n}|^{\alpha-1}}.
$$
Therefore, the last sum is finite if and only if $\alpha>1$.

\end{proof}
Next we extend a result of 
Borichev and Lyubarskii  \cite[Theorem 2.8]{BL}. They  proved  that $\{\Bbbk_{\widetilde{\lambda}_n}\}_{n\geq 0}$ is a Riesz { basis}  for $\cF^2_\varphi$ where $\widetilde{\lambda}_0=0$, $|\widetilde{\lambda}_n|= \exp \left[(w_{n+1}-w_{n-1})/4\right]$ and 
$$w_n:=\log\|z^n\|^2=c(n+1)^{1+\frac{1}{\beta}}+O(\log n).$$
Their proof was based on  Bari's theorem (see \cite[section A.5.7.1]{N1}). Using  the estimate on the moments $\|z^n\|_{\varphi,2}$ given in Lemma \ref{moment}, we produce  a family of sequences of similar  kind. These sequences could be viewed as a reference family of complete interpolating sequences for $\cF^2_\varphi$ and will play a crucial tool in the proof of the main theorems.  A proof of this lemma was given by Baranov-Belov-Borichev in \cite[Theorem 1.2]{BBB}  using the results of Belov-Mengestie-Seip  \cite{BMS}. However, for the sake of completeness, we give here another proof that uses Bari's theorem as in \cite{BL}.
\begin{lem}\label{thm1}  Let $\Lambda$  be a sequence defined by  \eqref{Lambda}. Then 
$\cK_\Lambda=\left\{\Bbbk_{\lambda}\right\}_{ \lambda\in\Lambda}$ is a Riesz basis  for $\cF^2_\varphi$.
\end{lem}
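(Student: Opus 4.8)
The plan is to verify the hypotheses of Bari's theorem, which states that a normalized sequence $\{\Bbbk_\lambda\}$ in a Hilbert space $H$ is a Riesz basis if and only if it is complete, minimal, and possesses a complete biorthogonal system that is also minimal (equivalently, the Gram matrix defines a bounded invertible operator on $\ell^2$). Following Borichev--Lyubarskii, I would exploit the fact that $\Lambda$ consists of points at radii $|\lambda_n| = \exp\left(\frac{1+n}{1+\beta}\right)^{1/\beta}$, which is exactly the ``geometric mean of consecutive moments'' type spacing: from Lemma \ref{moment} one checks that $\log\|z^n\|_{\varphi,2}^2 = 2\beta\left(\frac{1+n}{1+\beta}\right)^{\frac{1+\beta}{\beta}} + \frac{1-\beta}{2\beta}\log\left(\frac{1+n}{1+\beta}\right) + O(1)$, so that $|\lambda_n|^2 \asymp \|z^n\|_{\varphi,2}\cdot\|z^{n}\|_{\varphi,2}$-type balancing makes the monomials $e_n(z) = z^n/\|z^n\|_{\varphi,2}$ nearly ``peak'' at $\lambda_n$; indeed this is precisely the content of \eqref{kernelBBB}, which gives $\|\mathrm{k}_{\lambda_n}\|_{\varphi,2}^2 \asymp |e_n(\lambda_n)|^2$.

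\textbf{First}, I would construct the biorthogonal system explicitly via the generating function $G_\Lambda(z) = \prod_n (1 - z/\lambda_n)$, which by Lemma \ref{lem-estim} (applied with all $\delta_n = 0$, hence $\Delta_N = 0$) converges and satisfies $|G_\Lambda(z)|\mathrm{e}^{-\varphi(z)} \asymp \dist(z,\Lambda)/(1+|z|)^{3/2}$ up to $(1+|z|)^{\pm\varepsilon}$. The candidate biorthogonal functions are $g_n(z) = G_\Lambda(z)/\big(G_\Lambda'(\lambda_n)(z-\lambda_n)\big)$, normalized appropriately; one checks $g_n(\lambda_m) = \delta_{nm}$, and using the growth estimate together with Lemma \ref{integ} (with $\alpha$ slightly bigger than $1$, which holds since the exponent is $3 \pm 2\varepsilon$) one shows $g_n \in \cF^2_\varphi$ with $\|g_n\|_{\varphi,2}^2 \asymp 1/\|\mathrm{k}_{\lambda_n}\|_{\varphi,2}^2$, so that $\{g_n \|\mathrm{k}_{\lambda_n}\|_{\varphi,2}\}$ is a bounded sequence biorthogonal to $\{\Bbbk_{\lambda_n}\}$. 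This already yields minimality of $\cK_\Lambda$.

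\textbf{Second}, for completeness of both $\cK_\Lambda$ and its biorthogonal system: a function $f \in \cF^2_\varphi$ orthogonal to all $\Bbbk_{\lambda_n}$ vanishes on $\Lambda$, hence $f/G_\Lambda$ is entire; using the lower bound on $|G_\Lambda|$ away from $\Lambda$ and the fact that $f \in \cF^2_\varphi$ forces $f/G_\Lambda$ to have order of growth strictly below that permitted, a Phragmén--Lindelöf / maximum-principle argument (as in \cite{BL}) forces $f \equiv 0$. Completeness of the biorthogonal system follows symmetrically, or from the fact that $\cF^2_\varphi$ has an unconditional basis $\{e_n\}$ comparable to it. \textbf{Third} — and this is the main obstacle — one must show the system is \emph{uniformly} minimal in the strong sense required by Bari, i.e. that the natural synthesis operator $(a_n) \mapsto \sum a_n \Bbbk_{\lambda_n}$ is bounded below and above. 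The upper bound (Bessel) follows from a Carleson-type embedding: $\sum_n |f(\lambda_n)|^2/\|\mathrm{k}_{\lambda_n}\|_{\varphi,2}^2 \lesssim \|f\|_{\varphi,2}^2$, provable by a mean-value estimate over the disjoint disks $\D(\lambda_n, c|\lambda_n|)$ combined with subharmonicity of $|f|^2\mathrm{e}^{-2\varphi}$ after the standard correction. The lower bound is the delicate part: given $(a_n) \in \ell^2$ one builds $f$ with $f(\lambda_n)/\|\mathrm{k}_{\lambda_n}\|_{\varphi,2}\cdot(\text{phase}) = a_n$ by interpolation, $f = G_\Lambda \cdot \sum_n \frac{a_n c_n}{G_\Lambda'(\lambda_n)(z-\lambda_n)}$, and estimates $\|f\|_{\varphi,2}^2 \lesssim \sum |a_n|^2$ by splitting the plane into the annuli between consecutive $|\lambda_n|$, bounding on each annulus the ``near'' term by the local contribution and the ``far'' tails by the geometric decay coming from the super-exponential growth of $|\lambda_n|$ (the ratio $|\lambda_{n+1}|/|\lambda_n| \to \infty$ gives very fast lacunarity, which makes the off-diagonal Gram entries summable). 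I expect the bookkeeping in this last interpolation estimate — controlling $\sum_n \sum_m \frac{|a_n||a_m|}{(1+|\lambda_m|)^{?}}\cdots$ and matching powers against Lemma \ref{integ} — to be the genuinely technical heart of the proof, exactly as in \cite[Theorem 2.8]{BL}; everything else is structural.
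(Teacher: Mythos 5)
Your architecture (completeness via the zero-set/uniqueness argument with $G_\Lambda$, minimality via the explicit biorthogonal family, then two-sided frame bounds) is a legitimate route in principle, but it contains a concrete gap at the step you dismiss as routine: the Bessel/upper bound. For $0<\beta<1$ the sub-mean-value estimate over the disjoint disks $\D(\lambda_n,c|\lambda_n|)$ gives $|f(\lambda_n)|^2\mathrm{e}^{-2\varphi(\lambda_n)}\lesssim |\lambda_n|^{-2}\int_{\D(\lambda_n,c|\lambda_n|)}|f|^2\mathrm{e}^{-2\varphi}\,\mathrm{d}m$, while by \eqref{kernelBBB} one has $\|\mathrm{k}_{\lambda_n}\|_{\varphi,2}^{-2}\asymp \mathrm{e}^{-2\varphi(\lambda_n)}|\lambda_n|^2\left(\log|\lambda_n|\right)^{\frac{1-\beta}{2}}$; combining these leaves an unbounded factor $\left(\log|\lambda_n|\right)^{\frac{1-\beta}{2}}$ in front of each local integral, so the disjoint-disk Carleson argument does \emph{not} yield $\sum_n|\langle f,\Bbbk_{\lambda_n}\rangle|^2\lesssim\|f\|_{\varphi,2}^2$. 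This is exactly the phenomenon recorded in the remark after Corollary \ref{keernel}: at the points of $\Lambda$ the kernel norm exceeds the generic value $\mathrm{e}^{2\varphi}/\rho^2$, so the mean-value bound is too lossy there; the scheme works for $\beta=1$ (where $\rho(z)=|z|$) but breaks for $\beta<1$. Enlarging the disks to radius $\sim\rho(\lambda_n)$ is not available either, since the Riesz mass of $\varphi$ on such disks is unbounded. In the paper this embedding is only obtained later (Lemma \ref{Boundedness}) by expanding along $\cK_\Lambda$ and bounding a matrix — an argument that presupposes the present lemma. In addition, your lower bound defers the entire interpolation/matrix estimate, which for this route is not bookkeeping but the whole proof (it is essentially Section \ref{sectthree} specialized to $\delta_n=0$). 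Minor point: the version of Bari's theorem you quote (complete, minimal, with complete minimal biorthogonal system) is not a characterization of Riesz bases; fortunately you do not actually use it.

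The paper's proof is much lighter and avoids both issues: since the weight is radial, the normalized monomials $e_n=z^n/\|z^n\|_{\varphi,2}$ form an orthonormal basis, and one computes directly that $\sum_n\|\mathrm{e}^{-in\theta_n}e_n-\Bbbk_{\lambda_n}\|_{\varphi,2}^2\asymp\sum_n\sum_{k\neq n}|e_k(\lambda_n)/e_n(\lambda_n)|^2<\infty$, using Lemma \ref{moment} and the identity $\|\mathrm{k}_{\lambda_n}\|_{\varphi,2}\asymp|e_n(\lambda_n)|$ (this is where the specific radii of $\Lambda$ enter). Quadratic closeness to an orthonormal basis, together with completeness and minimality (which you establish the same way, via Lemmas \ref{lem-estim} and \ref{integ}), gives the Riesz basis property by the quadratic-closeness form of Bari's theorem, with no Carleson embedding and no interpolation estimate needed.
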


\begin{proof}
 Let $n\geqslant 0$, $e_n(z)=z^n/\|z^n\|_{\varphi,2}$, and $h_n:=\mathrm{e}^{-in\theta_n}e_n$. We have 
\begin{eqnarray*}
\left\|h_n-\Bbbk_{\lambda_n}\right\|^{2}_{\varphi,2}& = & \left\|\mathrm{e}^{-in\theta_n}e_n-\frac{\overline{e_n(\lambda_n)}}{\|\mathrm{k}_{\lambda_n}\|_{\varphi,2}}e_n-\sum_{k\neq n} \frac{\overline{e_k(\lambda_n)}}{\|\mathrm{k}_{\lambda_n}\|_{\varphi,2}}e_k\right\|^{2}_{\varphi,2} \\
&=& \underset{J_1}{\underbrace{\left|1-\frac{|e_n(|\lambda_n|)|}{\|\mathrm{k}_{\lambda_n}\|_{\varphi,2}}\right|^2}}+ \underset{J_2}{\underbrace{\sum_{k\neq n} \frac{|e_k(\lambda_n)|^2}{\|\mathrm{k}_{\lambda_n}\|^2_{\varphi,2}}}}. \nonumber
\end{eqnarray*}
Furthermore, 
$$J_1 = \left|1-\frac{|e_n(|\lambda_n|)|}{\|\mathrm{k}_{\lambda_n}\|_{\varphi,2}}\right|^2\leq   1-\frac{|e_n(\lambda_n)|^2}{\|\mathrm{k}_{\lambda_n}\|^2_{\varphi,2}} = J_2.$$
Consequently,
\begin{eqnarray}
\left\|h_n-\Bbbk_{\lambda_n}\right\|^2_{\varphi,2} \ \asymp\  \sum_{k\neq n} \frac{|e_k(\lambda_n)|^2}{\|\mathrm{k}_{\lambda_n}\|^2_{\varphi,2}} \
 \lesssim \ \sum_{k\neq n} \left|\frac{e_k(\lambda_n)}{e_n(\lambda_n)}\right|^2 .\nonumber
\end{eqnarray}
On the other hand, for $k\neq n$ we have
\begin{eqnarray}
\frac{|e_n(\lambda_n)|^2}{|e_k(\lambda_n)|^2} & \asymp & \left(\frac{1+k}{1+n}\right)^{\frac{1-\beta}{2\beta}}\mathrm{e}^{2c(n,k)}, \nonumber
\end{eqnarray}
where
\begin{eqnarray}
c(n,k) & := & (n-k)  \left(\frac{1+n}{1+\beta}\right)^{\frac{1}{\beta}}+\beta \left[   \left(\frac{1+k}{1+\beta}\right)^{\frac{1+\beta}{\beta}} - \left(\frac{1+n}{1+\beta}\right)^{\frac{1+\beta}{\beta}}  \right]. \nonumber
\end{eqnarray}
Therefore, by simple computations we get
$$c(n,k) \geqslant (1+\beta)^{-\frac{1+\beta}{\beta}} |k-n|^2(1+n)^{\frac{1}{\beta}-1}.
$$
Thus,
\begin{eqnarray}
\sum_{n=0}^{\infty}\left\|h_n-\Bbbk_{\lambda_n}\right\|^2_{\varphi,2} & \asymp & \sum_{n=0}^\infty\sum_{k\neq n} \frac{|e_k(\lambda_n)|^2}{\|\mathrm{k}_{\lambda_n}\|^2_{\varphi,2}} \nonumber\\
    & \lesssim & \sum_{n=0}^\infty\sum_{k\neq n} \left(\frac{1+n}{1+k}\right)^{\frac{1-\beta}{2\beta}}\ \mathrm{e}^{-2  (1+\beta)^{-\frac{1+\beta}{\beta}}   |k-n|^2(1+n)^{\frac{1}{\beta}-1}} <\infty.\nonumber
\end{eqnarray}

Note that $\Lambda=\Gamma$ with $\delta_k=0$.  So Lemmas \ref{lem-estim}   and \ref{integ} imply that $\Lambda$ is a uniqueness set for $\cF^2_\varphi$, that is the unique function in $\cF^2_\varphi$ that vanishes on $\Lambda$ is the zero function. Also, for any $\lambda\in\Lambda$ the sequence $\Lambda\setminus\{\lambda\}$ is a zero sequence of $\cF^2_\varphi$ and, hence,  the system $\cK_\Lambda$ is complete and minimal in $\cF^2_\varphi$ and $\underset{n}{\sum}\ \|h_n-\Bbbk_{\lambda_n}\|^2_{\varphi,2}<\infty$. Bari's theorem  \cite[section A.5.7.1]{N1} ensures that $\cK_\Lambda$ is a Riesz basis for $\cF^2_\varphi$.
\end{proof}
For $\cF^\infty_\varphi$ we have the following result 
\begin{lem}\label{propo}
Let $\lambda^*\in\C\setminus\Lambda$. Then
$\Lambda\cup\{\lambda^*\}$ is a complete interpolating sequence for $\cF^\infty_\varphi$. 
\end{lem}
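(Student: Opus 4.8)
The plan is to show that $\Lambda\cup\{\lambda^*\}$ is simultaneously sampling and interpolating for $\cF^\infty_\varphi$, exploiting the function $G_\Lambda$ and the estimates of Lemma \ref{lem-estim} (applied with $\delta_n\equiv 0$, so $\Delta_N=0$). Since $\Lambda=\Gamma$ with all $\delta_n=0$, Lemma \ref{lem-estim} gives
\[
\frac{1}{C}\frac{\dist(z,\Lambda)}{(1+|z|)^{3/2+\varepsilon}}\ \le\ |G_\Lambda(z)|\mathrm{e}^{-\varphi(z)}\ \le\ C\frac{\dist(z,\Lambda)}{(1+|z|)^{3/2-\varepsilon}},\qquad z\in\C,
\]
for every small $\varepsilon>0$. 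Note that $G_\Lambda$ itself is not in $\cF^\infty_\varphi$ because of the decay $(1+|z|)^{-3/2+\varepsilon}$ away from $\Lambda$; this is precisely why one extra point $\lambda^*$ must be added. The natural candidate generating function for $\Lambda\cup\{\lambda^*\}$ is $G(z):=(1-z/\lambda^*)G_\Lambda(z)$, which satisfies $|G(z)|\mathrm{e}^{-\varphi(z)}\asymp (1+|z|)\,|G_\Lambda(z)|\mathrm{e}^{-\varphi(z)}$ away from a neighbourhood of $\lambda^*$, hence grows like $(1+|z|)^{-1/2\pm\varepsilon}\dist(z,\Lambda)$; in particular $G\in\cF^\infty_\varphi$ and $|G(z)|\mathrm{e}^{-\varphi(z)}\asymp \dist(z,\Lambda)(1+|z|)^{-1/2}$ for $z$ in the annuli between consecutive $|\lambda_n|$, away from $\Lambda$.

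First I would prove the \emph{interpolation} part. Given data $(v_\gamma)_{\gamma\in\Lambda\cup\{\lambda^*\}}$ with $\sup_\gamma|v_\gamma|\mathrm{e}^{-\varphi(\gamma)}<\infty$, use the standard series
\[
f(z)=\sum_{\gamma\in\Lambda\cup\{\lambda^*\}}v_\gamma\,\frac{G(z)}{G'(\gamma)(z-\gamma)}.
\]
One must estimate $|G'(\lambda_n)|$ from below: differentiating the product and using Lemma \ref{lem-estim} together with the explicit $|\lambda_n|=\exp\big((1+n)/(1+\beta)\big)^{1/\beta}$ and the fact that consecutive moduli are comparable in the relevant sense, one gets $|G'(\lambda_n)|\mathrm{e}^{-\varphi(\lambda_n)}\asymp (1+|\lambda_n|)^{-1/2}\,\dist'$, where the relevant gap $|\lambda_{n+1}-\lambda_n|$ enters; more precisely I expect $|G'(\lambda_n)|\mathrm{e}^{-\varphi(\lambda_n)}\asymp |\lambda_n|^{1/2}(\log|\lambda_n|)^{(1-\beta)/2}$ up to $|\lambda_n|^{\pm\varepsilon}$ (this is the uniform-norm analogue of \eqref{kernelBBB}). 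Then for each fixed $z$ in an annulus around $|\lambda_m|$, split the sum into the nearby term $\gamma=\lambda_m$ (or $\lambda_{m-1}$), for which $|G(z)/((z-\gamma)G'(\gamma))|\mathrm{e}^{\varphi(\gamma)-\varphi(z)}\lesssim 1$ directly, and the tail, where $|z-\gamma|\gtrsim (1+|z|)$ or $(1+|\gamma|)$ and the geometric growth of the $|\lambda_n|$ forces the sum $\sum_{n\neq m}\frac{(1+|z|)^{1/2}(1+|\lambda_n|)^{1/2}}{(1+\max(|z|,|\lambda_n|))^{2}}\cdot\frac{\mathrm{e}^{\varphi(z)-\varphi(\lambda_n)}}{\mathrm{e}^{\varphi(z)-\varphi(\lambda_n)}}$ to converge uniformly with a bound independent of $z$; the key point is that $\varphi(\lambda_{n+1})-\varphi(\lambda_n)\to\infty$, which makes the tails super-geometrically small. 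This gives $\|f\|_{\varphi,\infty}\lesssim \sup_\gamma|v_\gamma|\mathrm{e}^{-\varphi(\gamma)}$.

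Next, the \emph{sampling} part. Let $f\in\cF^\infty_\varphi$ with $\|f\|_{\varphi,\infty,\Lambda\cup\{\lambda^*\}}=1$; I must show $\|f\|_{\varphi,\infty}\lesssim 1$. The natural route is: the function $h(z)=f(z)/G(z)$ is entire (the zeros of $G$ are simple and $f$ is bounded in modulus at them after dividing, by the interpolation data bound), and one controls $f$ on the circles $|z|=\sigma_n=\exp\big((n+1/2)/(1+\beta)\big)^{1/\beta}$ lying midway between consecutive $|\lambda_n|$, where $|G(z)|\mathrm{e}^{-\varphi(z)}$ has a lower bound of order $|z|^{-1/2}(\log|z|)^{(1-\beta)/2}$ (up to $|z|^{\pm\varepsilon}$), while the \emph{maximum} of $|G|$ on such a circle relative to $\mathrm{e}^{\varphi}$ is also controlled. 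Combine a Phragmén–Lindelöf / maximum-principle argument on the annuli $\{\sigma_{n}\le|z|\le\sigma_{n+1}\}$: on the boundary circles $h=f/G$ is bounded by $\sup_{|z|=\sigma_n}|f(z)|\mathrm{e}^{-\varphi(z)}\cdot\big(\inf_{|z|=\sigma_n}|G(z)|\mathrm{e}^{-\varphi(z)}\big)^{-1}$, and one bounds $\sup_{|z|=\sigma_n}|f|\mathrm{e}^{-\varphi}$ in terms of the values $|f(\lambda_n)|\mathrm{e}^{-\varphi(\lambda_n)}$ using the interpolation estimate already proved applied to the difference $f-\sum v_{\lambda_k}\tfrac{G}{G'(\lambda_k)(\cdot-\lambda_k)}$ — this difference vanishes on $\Lambda\cup\{\lambda^*\}$ and lies in $\cF^\infty_\varphi$, hence is $\equiv 0$ — so in fact $f$ equals its own interpolation series and $\|f\|_{\varphi,\infty}\lesssim\|f\|_{\varphi,\infty,\Lambda\cup\{\lambda^*\}}$ follows. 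In other words, the interpolation series gives both halves: existence of the interpolant proves interpolation, and the uniqueness (uniqueness set property of $\Lambda$ for $\cF^2_\varphi$, hence a fortiori for $\cF^\infty_\varphi$ after passing through $G$) together with the norm bound on the series proves sampling. I would therefore organize the proof as: (i) record the lower/upper bounds for $G$ and for $|G'(\lambda_n)|$; (ii) prove the series estimate $\big\|\sum_\gamma v_\gamma \tfrac{G}{G'(\gamma)(\cdot-\gamma)}\big\|_{\varphi,\infty}\lesssim\sup_\gamma|v_\gamma|\mathrm{e}^{-\varphi(\gamma)}$; (iii) deduce interpolation directly and sampling via uniqueness ($\Lambda$ is a uniqueness set for $\cF^2_\varphi\supset$ the bounded-growth functions relevant here).

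The main obstacle I anticipate is step (ii): getting a bound on the interpolation series that is \emph{uniform in $z$ over all of $\C$}, in particular handling $z$ near a point of $\Lambda$ (where one term blows up but is cancelled by the zero of $G$) and controlling the tail sum with the moduli $|\lambda_n|$ growing like $\exp\big((1+n)/(1+\beta)\big)^{1/\beta}$ — the saving grace being that $\varphi(\lambda_{n+1})-\varphi(\lambda_n)\to\infty$, so the tails decay faster than geometrically, but one must be careful that the polynomial factors $(1+|z|)^{\pm1/2\pm\varepsilon}$ and the distance factors $\dist(z,\Lambda)$ are absorbed correctly. A secondary subtlety is the sharp two-sided estimate for $|G'(\lambda_n)|\mathrm{e}^{-\varphi(\lambda_n)}$, which should match $\|\mathrm{k}_{\lambda_n}\|_{\varphi,2}$ from \eqref{kernelBBB} up to $|\lambda_n|^{\pm\varepsilon}$; this is essentially a differentiated version of Lemma \ref{lem-estim} and I would extract it from the proof of that lemma rather than re-deriving it from scratch.
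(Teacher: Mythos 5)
Your plan coincides with the paper's proof: the same generating function $(1-z/\lambda^*)G_\Lambda$, uniqueness via Hadamard factorization together with the lower bound of Lemma \ref{lem-estim}, interpolation via the Lagrange-type series whose tail is controlled by the super-geometric growth of $\varphi(\lambda_{n+1})-\varphi(\lambda_n)$, and sampling deduced from uniqueness plus the bounded interpolation operator. The only slip is the displayed estimate $|G'(\lambda_n)|\mathrm{e}^{-\varphi(\lambda_n)}\asymp|\lambda_n|^{1/2}(\log|\lambda_n|)^{(1-\beta)/2}$, where the exponent of $|\lambda_n|$ should be negative (Lemma \ref{lem-estim} with the extra factor $|1-\lambda_n/\lambda^*|$ gives $(1+|\lambda_n|)^{-1/2\mp\varepsilon}$); since your tail sum already uses the correct reciprocal $(1+|\lambda_n|)^{1/2}$ for $\mathrm{e}^{\varphi(\lambda_n)}/|G'(\lambda_n)|$, this does not affect the argument.
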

\begin{proof}
First let us prove that $\widetilde{\Lambda}=\Lambda\cup\{\lambda^*\}$ is a uniqueness set for $\cF^\infty_\varphi$. Indeed, suppose that $\lambda^*\neq0$ and take $f\in\cF^\infty_\varphi$ vanishing on $\Lambda\cup\{\lambda^*\}$, so  that $f(z)=(1-z/\lambda^*)h(z)G_\Lambda(z)$ {where} $h$ is an entire function. By Lemma \ref{lem-estim} we get
$$ |h(z)|\frac{\dist(z,\widetilde{\Lambda})}{(1+|z|)^{1/2+\varepsilon}}\ \lesssim\ |f(z)|\mathrm{e}^{-\varphi(z)}\ \lesssim\ 1,\quad z\in\C.$$
It follows that $h$ is the zero function.
It remains to show that $\widetilde{\Lambda}$ is an  interpolating set for $\cF^\infty_\varphi$. For this purpose let $v=(v_\lambda)_{\lambda\in \widetilde{\Lambda}}$ { be a sequence of complex numbers such that} $\|v\|_{\varphi,\infty,\widetilde{\Lambda}}<\infty.$    Put  $F(z)=(1-z/\lambda^*)G_\Lambda(z)$ and consider the function
\begin{eqnarray*}
F_v(z)\ =\ \sum_{\lambda\in \widetilde{\Lambda}} v_\lambda \frac{F(z)}{F'(\lambda)(z-\lambda)},\quad z\in\C.
\end{eqnarray*}
We have
\begin{eqnarray}
\left|F_v(z)\right| & \leq & \sum_{\lambda\in \widetilde{\Lambda} }\ |v_\lambda| \left|\frac{F(z)}{F'(\lambda)(z-\lambda)}\right| \nonumber\\
   & {\lesssim} & \|v\|_{\varphi,\infty,\widetilde{\Lambda}}\ \sum_{\lambda\in \widetilde{\Lambda}}\ \mathrm{e}^{\varphi(\lambda)}\frac{|z|}{|\lambda|} \left|\frac{G_\Lambda(z)}{G_\Lambda'(\lambda)(z-\lambda)}\right|. \nonumber 
\end{eqnarray}
Let $z\in\C$ and $p\in\N$ such that $|\lambda_{p-1}|\leq |z|<|\lambda_p|$. Suppose that $0\neq |\lambda^*|<|\lambda_1|$ and write $\widetilde{\Lambda}=(\lambda_n)_{n\geq -1}$, where $\lambda_{-1}=\lambda^*$. Let $|z|=e^t$ and $u_n=\log|\lambda_n|$. We have 
$$ \sum_{\lambda\in \widetilde{\Lambda}}\ \mathrm{e}^{\varphi(\lambda)}\frac{|z|}{|\lambda|} \left|\frac{G_\Lambda(z)}{G_\Lambda'(\lambda)(z-\lambda)}\right|  \asymp \underbrace{\sum_{n\leq p-1}\ldots }_{\mathcal{I}_1}+\underbrace{\sum_{n\geq p} \ \mathrm{e}^{\varphi(\lambda_n)} \frac{\dist(z,\Lambda)}{|z-\lambda_n|}\frac{|z|^{p}}{|\lambda_n|^n}\prod_{k=0}^{p-1}\frac{1}{|\lambda_k|}\prod_{k=0}^{n-1}|\lambda_k|}_{\mathcal{I}_2} .$$
By \eqref{mac}, 
\begin{eqnarray*}
\mathcal{I}_1 & = & \sum_{n \leq p-1}\ \mathrm{e}^{\varphi(\lambda_n)} \frac{\dist(z,\Lambda)}{|z-\lambda_n|}\frac{|z|^{p}}{|\lambda_n|^n}\prod_{k=0}^{p-1}\frac{1}{|\lambda_k|}\prod_{k=0}^{n-1}|\lambda_k| \nonumber \\
 & \leq & |z|^{p}\sum_{n \leq p-1}\ \exp\left[u_n^{\beta+1}-(n+1)u_n+ \sum_{k=0}^{n}u_k-\sum_{k=0}^{p-1}u_k\right]\nonumber \\
  & = & \mathrm{e}^{pt-\beta u_{p-1}^{\beta+1}}\ \sum_{n \leq p-1}\ \exp\left[ \left(\frac{u_n}{2}+d_\beta u_n^{1-\beta}\right)-\left(\frac{u_{p-1}}{2}+d_\beta u_{p-1}^{1-\beta}\right)\right]\nonumber \\
  & \leq & \mathrm{e}^{pt-\beta u_{p-1}^{\beta+1}}\ \sum_{n\geq 0}\ \mathrm{e}^{-c \left|u_{p-1}-u_n\right|}\nonumber \\
  & \lesssim & \exp\left(\varphi(z)- \left(\frac{1}{2\beta(\beta+1)}+o(1)\right)(p-s)^2t^{1-\beta}\right) \leq  \mathrm{e}^{\varphi(z)} \nonumber\\
  \end{eqnarray*}
and 
\begin{eqnarray}
\mathcal{I}_2 & \asymp & |z|^{p+1}\sum_{n\geq p}\ \exp\left[u_n^{\beta+1}-(1+n)u_n+ \sum_{k=p}^{n-1}u_k\right] \nonumber \\
  & \leq & \mathrm{e}^{(p+1)t-\beta u_p^{\beta+1}}\ \sum_{n\geq 0}\ \exp\left[-\frac{1}{2}\left| u_n-u_p\right|+d_\beta \left|u_n^{1-\beta}- u_p^{1-\beta}\right|\right] \nonumber \\
  & \lesssim & \mathrm{e}^{(p+1)t-\beta u_p^{\beta+1}}\ \leq \mathrm{e}^{\varphi(z)}. \nonumber  
\end{eqnarray}
Thus, the   interpolating function $F_v$ belongs to  $\cF^\infty_\varphi$. This completes  the proof.

\end{proof}

\subsection{$d-$separated and $\log$-separated sequences}
{A sequence $\left\{\mu_n\right\}$ of real numbers is said to be {\it separated} whenever there exists a constant $\delta>0$ such that  
\[
 \inf_{n\neq m}|\mu_n-\mu_m| \geq\delta.
\]
Let $\Gamma$ be a sequence of complex numbers. It is not difficult to see that if $\log\Gamma:=\left\{\log|\gamma|\ :\ \gamma\in\Gamma\right\}$ is separated then $\Gamma$ is $d-$separated, and also if $\Gamma$ is $d-$separated then $\log\Gamma$ is a finite union of separated sequences. Hence $\Gamma$ is a finite union of $d-$separated sequences if and only if $\log\Gamma$ is a finite union of separated real sequences.}\\

The following lemma was established  by using a Bernstein type inequality in \cite{BDHK} in the case  $\beta=1$.  The proof when $ 0 <\beta <1 $ is different. 
\begin{lem}\label{Boundedness}   Let $\Gamma$  be a sequence defined by  \eqref{Gamm}. Then 
\begin{equation}\label{Gammasep}
\sum_{\gamma_n\in\Gamma}\ \left|\langle f,\Bbbk_{\gamma_n}\rangle\right|^2\ \leq\ C(\Gamma)\ \|f\|_{\varphi,2}^2, 
\end{equation}
for every $f\in\cF^2_\varphi$ if and only if $\left\{\log|\gamma_n| \text{ : } \gamma_n\in \Gamma \right\}$ is a finite union of {separated} sequences.
\end{lem}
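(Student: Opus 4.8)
\textbf{Proof proposal for Lemma \ref{Boundedness}.}
The plan is to reduce the Carleson-type embedding \eqref{Gammasep} to a statement about the reference sequence $\Lambda$, exploiting that $\cK_\Lambda$ is already a Riesz basis (Lemma \ref{thm1}) so that the synthesis/analysis operators associated with $\Lambda$ are bounded. First I would record the elementary geometric fact stated just above the lemma: $\log\Gamma$ is a finite union of separated sequences if and only if each ``horizontal strip'' $\{\gamma_n\in\Gamma:\ \log|\gamma_n|\in[u_m,u_{m+1})\}$ (with $u_m=\log|\lambda_m|$) contains at most boundedly many points, uniformly in $m$; here one uses $u_{m+1}-u_m\asymp m^{\frac1\beta-1}\to\infty$ together with the hypothesis $|\delta_n|=O(m^{\frac1\beta-1})$ built into \eqref{Gamm}, which forces every $\gamma_n$ in such a strip to satisfy $|n-m|\lesssim 1$. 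So the two sides of the claimed equivalence can both be phrased as: the multiplicity function $M(m):=\#\{n:\ |\gamma_n|\in[\,|\lambda_m|,|\lambda_{m+1}|)\,\}$ is bounded.

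For the sufficiency direction ($M$ bounded $\Rightarrow$ \eqref{Gammasep}), I would test the inequality and estimate $|\langle f,\Bbbk_{\gamma_n}\rangle|^2=|f(\gamma_n)|^2/\|\mathrm k_{\gamma_n}\|_{\varphi,2}^2$. Using the submean-value property of $|f|^2\mathrm e^{-2\varphi}$ over a Euclidean disk $\D(\gamma_n,c|\gamma_n|)$ (with $c$ small, comparable to a $d$-ball), one gets
\[
|f(\gamma_n)|^2\mathrm e^{-2\varphi(\gamma_n)}\ \lesssim\ \frac{1}{|\gamma_n|^2}\int_{\D(\gamma_n,c|\gamma_n|)}|f(z)|^2\mathrm e^{-2\varphi(z)}\,\mathrm dm(z),
\]
after absorbing the oscillation of $\varphi$ on the disk, which is bounded since $\varphi(r)=(\log^+r)^{\beta+1}$ is slowly varying. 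Combining this with the lower kernel bound $\|\mathrm k_{\gamma_n}\|_{\varphi,2}^2\gtrsim \mathrm e^{2\varphi(\gamma_n)}/\rho(\gamma_n)^2=\mathrm e^{2\varphi(\gamma_n)}/\big(|\gamma_n|^2(\log|\gamma_n|)^{1-\beta}\big)$ from \eqref{kernEstim}, one obtains
\[
|\langle f,\Bbbk_{\gamma_n}\rangle|^2\ \lesssim\ (\log|\gamma_n|)^{1-\beta}\int_{\D(\gamma_n,c|\gamma_n|)}|f|^2\mathrm e^{-2\varphi}\,\mathrm dm.
\]
Summing over $n$ and observing that the dilated disks $\D(\gamma_n,Cc|\gamma_n|)$ have bounded overlap --- here $M$ bounded gives boundedly many $\gamma_n$ per annulus $|z|\in[|\lambda_m|,|\lambda_{m+1}|)$, and the weight $(\log|\gamma_n|)^{1-\beta}$ is essentially constant on such an annulus while $m^{\frac1\beta-1}$ is the annulus width over which we must not double count --- one gets $\sum_n|\langle f,\Bbbk_{\gamma_n}\rangle|^2\lesssim \|f\|_{\varphi,2}^2$. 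The one point requiring care is that the overlap count of the disks must beat the factor $(\log|\gamma_n|)^{1-\beta}$; this works because within a single annulus of $\asymp M$ points the disks can be taken pairwise disjoint by shrinking $c$ (using $d$-type geometry), and across annuli the logarithmic factor is absorbed by the annulus being ``thick'' in the radial variable.

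For the necessity direction ($M$ unbounded $\Rightarrow$ \eqref{Gammasep} fails), I would argue by contradiction: if along some subsequence $m_j$ the annulus $A_j=\{|z|\in[|\lambda_{m_j}|,|\lambda_{m_j+1}|)\}$ contains $M(m_j)\to\infty$ points of $\Gamma$, test \eqref{Gammasep} against the monomial $f=e_{m_j}=z^{m_j}/\|z^{m_j}\|_{\varphi,2}$, whose modulus concentrates on exactly that annulus: by Lemma \ref{moment} and Corollary \ref{keernel}, $|e_{m_j}(\gamma_n)|^2\asymp \|\mathrm k_{\gamma_n}\|_{\varphi,2}^2\asymp \mathrm e^{2\varphi(\gamma_n)}/(|\gamma_n|\rho(\gamma_n))$ for each of the $\asymp M(m_j)$ points $\gamma_n\in A_j$ that are additionally within the ``peak zone'' $|\delta_n|\lesssim m_j^{\frac1\beta-1}$, whence $|\langle e_{m_j},\Bbbk_{\gamma_n}\rangle|^2=|e_{m_j}(\gamma_n)|^2/\|\mathrm k_{\gamma_n}\|_{\varphi,2}^2\asymp 1$; summing over these points yields a lower bound $\gtrsim M(m_j)\to\infty$ while $\|e_{m_j}\|_{\varphi,2}=1$, contradicting \eqref{Gammasep}. (If the excess points land just outside the peak zone, a finer but still elementary comparison using the Gaussian decay $\mathrm e^{-c|k-n|^2(1+n)^{\frac1\beta-1}}$ of $|e_k(\gamma_n)/e_n(\gamma_n)|$, exactly as in the proof of Lemma \ref{thm1}, shows their contribution is still $\asymp 1$ each.)

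The main obstacle I anticipate is the bookkeeping in the sufficiency direction: making precise that the covering disks have controlled overlap after accounting for the non-constant radial weight $(\log|z|)^{1-\beta}$ and the fact that the natural scale of separation $c|z|$ does not tile the plane the way Euclidean disks of fixed radius do. This is handled by passing to the logarithmic variable $w=\log z$, in which $\Gamma$ becomes a sequence with $\re w_n=u_n$, the weight becomes $\re(w)^{1-\beta}$, the annuli become unit-width (up to the $m^{\frac1\beta-1}$ rescaling) vertical strips, and the disjointness of the $d$-balls becomes genuine separation; the Carleson embedding then follows from a standard bounded-overlap argument in that coordinate. Everything else --- the kernel estimates, the moment asymptotics, the Bari/Riesz-basis structure of $\Lambda$ --- is quoted from the earlier lemmas.
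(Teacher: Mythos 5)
Your opening reduction is where the argument goes wrong, and it undermines both halves. The condition ``$\log\Gamma$ is a finite union of separated sequences'' is \emph{not} equivalent to boundedness of the per-annulus count $M(m)=\#\{n:\log|\gamma_n|\in[u_m,u_{m+1})\}$: since $u_{m+1}-u_m\asymp m^{\frac{1}{\beta}-1}\to\infty$, even a single separated sequence of log-moduli places $\asymp m^{\frac{1}{\beta}-1}$ points in that strip, so ``separated'' does not bound $M(m)$; and the hypothesis $|\delta_n|=O(n^{\frac{1}{\beta}-1})$ is \emph{not} built into \eqref{Gamm} (it is an extra assumption appearing only in Lemma \ref{lem-estim}), while if you do impose it, at most $O(1)$ indices can have $\log|\gamma_n|$ in any interval of bounded length, so the separation condition becomes automatic and your equivalence trivializes the lemma. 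The failure mode the lemma actually detects is unboundedly many $\gamma_n$ with essentially the same modulus but different arguments, which a per-annulus count cannot see. This breaks your necessity argument twice over: monomials are radial, so they cannot distinguish $k$ points at one modulus from a single point; and quantitatively, by the computation in the proof of Corollary \ref{keernel}, $|e_m(\gamma)|^2\asymp\|\mathrm{k}_\gamma\|_{\varphi,2}^2$ only when $\big|\log|\gamma|-u_m\big|\lesssim (\log|\gamma|)^{\frac{1-\beta}{2}}\sqrt{\log\log|\gamma|}$, a zone whose width is $o(u_{m+1}-u_m)$; in the bulk of the annulus one has $|\langle e_m,\Bbbk_\gamma\rangle|^2\lesssim (\log|\gamma|)^{1-\beta}\exp\big(-c\tau^2(\log|\gamma|)^{1-\beta}\big)$ with $\tau=(1+\beta)(\log|\gamma|)^\beta-(1+m)$, so the sum over your $M(m_j)$ points need not blow up at all. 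Your sufficiency half also does not close: the oscillation of $\varphi$ over $\D(\gamma_n,c|\gamma_n|)$ is $\asymp c(\log|\gamma_n|)^{\beta}$, not $O(1)$, and even granting a sub-mean-value inequality you are left with the unabsorbed divergent factor $(\log|\gamma_n|)^{1-\beta}$ coming from the gap between the two sides of \eqref{kernEstim}. This is precisely the phenomenon flagged in the Remark after Corollary \ref{keernel}, namely $\Delta\varphi(z)\,\mathrm{e}^{2\varphi(z)}=o\big(\|\mathrm{k}_z\|_{\varphi,2}^2\big)$ for $0<\beta<1$, which is why the standard Carleson-measure argument available for $\beta=1$ fails here.

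The paper's proof is global rather than local. One expands $f$ in the biorthogonal system $\{g_{\lambda_n}\}$ of the Riesz basis $\cK_\Lambda$ (Lemma \ref{thm1}), so that \eqref{Gammasep} is equivalent to the $\ell^2$-boundedness of the matrix $C_{n,m}=\langle g_{\lambda_n},\Bbbk_{\gamma_m}\rangle$. Explicit computation with the infinite product $G_\Lambda$, Lemma \ref{moment} and \eqref{kernelBBB} yields two-sided bounds $\mathrm{e}^{-\widetilde{c}_\beta|(1+p_m)^{1/\beta}-(1+n)^{1/\beta}|}\lesssim|C_{n,m}|\lesssim \mathrm{e}^{-c_\beta|(1+p_m)^{1/\beta}-(1+n)^{1/\beta}|}$, where $p_m$ records the gap of $\Lambda$ containing $|\gamma_m|$; the upper bound plus Schur's test gives sufficiency when $\log\Gamma$ is a finite union of separated sequences, and for necessity one tests \eqref{Gammasep} against the normalized functions $g_\lambda$ themselves and uses the lower bound to conclude $\sup_{\lambda\in\Lambda}\mathrm{Card}\big(\Gamma\cap D_d(\lambda,\delta)\big)<\infty$. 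You should rebuild your proof along these lines; the local covering strategy cannot be repaired without confronting the non-standard kernel asymptotics.
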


\begin{proof}
Let $f$ be a function in $\cF^2_\varphi$. According to Lemma \ref{thm1}, the system $\cK_\Lambda$ is a Riesz basis  for $\cF^2_\varphi$ and, hence, 
\[f(z)\ =\ \sum_{\lambda_n\in \Lambda}\ \langle f,\Bbbk_{\lambda_n}\rangle\ g_{\lambda_n}(z),\quad z\in\C,\]
where $\cG_\Lambda:=\{g_{\lambda_n}\ :\ \lambda_n\in \Lambda\}$ is the unique biorthogonal system of $\cK_\Lambda$ given by 
\begin{equation}\label{glambda}
g_{\lambda_n}(z) = \frac{\|k_{\lambda_n}\|}{G'_\Lambda(\lambda_n)}\ \frac{G_\Lambda(z)}{z-\lambda_n},\qquad z\in\C,
\end{equation}
and  $G_\Lambda$ is the infinite product associated with $\Lambda$. We then get
\begin{eqnarray}
\sum_{m\geq 0}\ \Big|\langle f,\Bbbk_{\gamma_m}\rangle\Big|^2 & = & \sum_{m\geq 0}\ \Big|\Big\langle \sum_{n\geq 0}\ \langle f,\Bbbk_{\lambda_n}\rangle\ g_{\lambda_n},\Bbbk_{\gamma_m}\Big\rangle\Big|^2 \nonumber \\
   & = & \sum_{m\geq 0}\ \Big| \sum_{n\geq 0}\ \langle f,\Bbbk_{\lambda_n}\rangle\ \langle g_{\lambda_n},\Bbbk_{\gamma_m}\rangle\Big|^2.\nonumber 
\end{eqnarray}
Since the operator $f \mapsto \left(\langle f,\Bbbk_{\lambda_n}\rangle\right)$ is an isomorphism between $\cF^2_\varphi$ and $\ell^2$, relation \eqref{Gammasep} is equivalent to the fact that the matrix $\cC= \left(C_{n,m}\right)_{n,m}$ defines a bounded operator on $\ell^2$, where
\begin{eqnarray*}
|C_{n,m}| & = & \left|\langle g_{\lambda_n},\Bbbk_{\gamma_m}\rangle\right| \ =\ \frac{\|\mathrm{k}_{\lambda_n}\|_{\varphi,2}}{\|\mathrm{k}_{\gamma_m}\|_{\varphi,2}}\left|\frac{G_\Lambda(\gamma_m)}{G'_\Lambda(\lambda_n)(\lambda_n-\gamma_m)}\right|.
\end{eqnarray*}
Set $p_m=[(1+\beta)\left(\log|\gamma_m|\right)^\beta]-1$. We have 
$$
\left|G_\Lambda(\gamma_m) \right| \asymp \frac{\dist(\gamma_m,\Lambda)}{\gamma_m}\prod_{k=0}^{p_m}\left|\frac{\gamma_m}{\lambda_k}\right|$$
and 
$$
\left|G'_\Lambda(\lambda_n) \right|  \asymp \frac{1}{|\lambda_n|}\prod_{k=0}^{n}\left|\frac{\lambda_n}{\lambda_k}\right|.
$$
Hence,
\begin{equation}
\label{cnm}
|C_{n,m}|  \asymp \frac{\|\mathrm{k}_{\lambda_n}\|_{\varphi,2}}{\|\mathrm{k}_{\gamma_m}\|_{\varphi,2}}\ \frac{\dist(\gamma_m,\Lambda)}{|\lambda_n-\gamma_m|}\ \frac{|\gamma_m|^{p_m}}{|\lambda_n|^{n}}\prod_{k=0}^{p_m}\frac{1}{|\lambda_k|}\prod_{k=0}^{n}|\lambda_k|.
\end{equation}
By  \eqref{kernelBBB}, we have
\begin{eqnarray*}
\|k_{\lambda_n}\|_{\varphi,2} \asymp \left|e_n(\lambda_n)\right|\quad \text{ and } \quad \|k_{\gamma_m}\|_{\varphi,2} \geq \left|e_{p_m}(\gamma_m)\right|
\end{eqnarray*}
thus we obtain 
\begin{align*}
|C_{n,m}| & \lesssim\  \frac{\|z^{p_m}\|_{\varphi,2}}{\|z^n\|_{\varphi,2}} \frac{\dist(\gamma_m,\Lambda)}{|\lambda_n-\gamma_m|}\ \prod_{k=0}^{p_m}\frac{1}{|\lambda_k|}\ \prod_{k=0}^{n}|\lambda_k| \\
       & \asymp \left(\frac{1+p_m}{1+n}\right)^{\frac{1-\beta}{4\beta}}\ \frac{\dist(\gamma_m,\Lambda)}{|\lambda_n-\gamma_m|}\ e^{-\alpha(n,m)},
\end{align*}
where
\begin{multline*}
\alpha(n,m)  = \beta\Big(\frac{1+n}{1+\beta}\Big)^{\frac{1+\beta}{\beta}}-\beta\Big(\frac{1+p_m}{1+\beta}\Big)^{\frac{1+\beta}{\beta}}+\sum_{k=0}^{p_m} \Big(\frac{1+k}{1+\beta}\Big)^{\frac{1}{\beta}}  -\sum_{k=0}^{n}\Big(\frac{1+k}{1+\beta}\Big)^{\frac{1}{\beta}} \\
     = \frac{1}{2}\Big[ 
   \Big( \frac{1+p_m}{1+\beta}\Big)^{\frac{1}{\beta}} - \Big( \frac{1+n}{1+\beta}\Big)^{\frac{1}{\beta}}\Big] +\frac{1+o(1)}{12\beta(1+\beta)}\Big[ 
   \Big( \frac{1+p_m}{1+\beta}\Big)^{\frac{1-\beta}{\beta}} - \Big( \frac{1+n}{1+\beta}\Big)^{\frac{1-\beta}{\beta}}\Big]. 
\end{multline*}
Now, if $|\gamma_m|\geq| \lambda_n|$,   then we have $\dist(\gamma_m,\Lambda)\leq |\lambda_n-\gamma_m|$, and hence,
\begin{equation}\label{est-cnm}
\left|C_{n,m}\right|\lesssim \mathrm{e}^{-c_\beta\left|(1+p_m)^{\frac{1}{\beta}}-(1+n)^{\frac{1}{\beta}}\right|}.
\end{equation}
If $|\gamma_m|< |\lambda_n|$, then  $\dist(\gamma_m,\Lambda)\leq|\lambda_{p_m}|$ (or $|\gamma_m|)$ and  $|\lambda_n-\gamma_m|\asymp|\lambda_n|$. Again we have  \eqref{est-cnm}. Thus \eqref{est-cnm} holds in both cases.\\

On the other hand, by  Corollary \ref{keernel} and   by \eqref{cnm},  we get
\begin{eqnarray*}
|C_{n,m}| & \gtrsim &\left(\frac{\log|\gamma_m|}{\log|\lambda_n|}\right)^{\frac{1-\beta}{4}}\mathrm{e}^{\varphi(\lambda_n)-\varphi(\gamma_m)}\ \frac{\dist(\gamma_m,\Lambda)}{|\lambda_n-\gamma_m|}\
\frac{ |\gamma_m|^{p_m+1}}{ |\lambda_n|^{n+1}} \prod_{k=0}^{p_m}\frac{1}{|\lambda_k|}\prod_{k=0}^{n}|\lambda_k| \\
 & \asymp &\left(\frac{1+p_m}{1+n}\right)^{\frac{1-\beta}{4\beta}}\frac{\dist(\gamma_m,\Lambda)}{|\lambda_n-\gamma_m|}\ \mathrm{e}^{-\widetilde{\alpha}(n,m)},
\end{eqnarray*}
where
{\begin{multline*}
\widetilde{\alpha}(n,m)  =\Big(\frac{1+p_m}{1+\beta}\Big)^{\frac{1+\beta}{\beta}}
-\Big(\frac{1+n}{1+\beta}\Big)^{\frac{1+\beta}{\beta}}\\
 +(1+n)\Big(\frac{1+n}{1+\beta}\Big)^{\frac{1}{\beta}}
 - (1+p_m)  \Big(\frac{1+p_m}{1+\beta}\Big)^{\frac{1+\beta}{\beta}}
+\sum_{k=0}^{p_m}  \Big(\frac{1+k}{1+\beta}\Big)^{\frac{1}{\beta}}
     -\sum_{k=0}^{n}   \Big(\frac{1+k}{1+\beta}\Big)^{\frac{1}{\beta}} \\
      =  \frac{1}{2}\Big[ 
   \Big( \frac{1+p_m}{1+\beta}\Big)^{\frac{1}{\beta}} - \Big( \frac{1+n}{1+\beta}\Big)^{\frac{1}{\beta}}\Big] +\frac{1+o(1)}{12\beta(1+\beta)}\Big[ 
   \Big( \frac{1+p_m}{1+\beta}\Big)^{\frac{1-\beta}{\beta}} - \Big( \frac{1+n}{1+\beta}\Big)^{\frac{1-\beta}{\beta}}\Big]. 
\end{multline*}
}
In the same way we get
\[\left|C_{n,m}\right| \gtrsim \mathrm{e}^{-\widetilde{c}_\beta\left|(1+p_m)^{\frac{1}{\beta}}-(1+n)^{\frac{1}{\beta}}\right|}.\]
Finally,
\[\mathrm{e}^{-\widetilde{c}_\beta\left|(1+p_m)^{\frac{1}{\beta}}-(1+n)^{\frac{1}{\beta}}\right|}\ \lesssim\  \left|C_{n,m}\right|\ \lesssim\  \mathrm{e}^{-c_\beta\left|(1+p_m)^{\frac{1}{\beta}}-(1+n)^{\frac{1}{\beta}}\right|}.\]

If $\big\{\log|\gamma| \text{ : } \gamma \in \Gamma \big\}$ is a finite union of separated sequences, then  $C=\left(C_{n,m}\right)$ is bounded on $\ell^2$. \\

In the opposite direction, since $\mathcal{K}_\Lambda$ is a Riesz basis, let $g_\lambda$ be the function given in \eqref{glambda},   $\|g_\lambda\|^{2}_{\varphi,2}\asymp 1$ and as before we have 
\begin{eqnarray*}
1 & \gtrsim & \sum_{\gamma\in\Gamma}\frac{|g_\lambda(\gamma)|^2}{k_\gamma(\gamma)}\\
  &\gtrsim  & \sum_{\gamma\in\Gamma \cap D_d(\lambda,\delta)} \frac{|g_\lambda(\gamma)|^2}{k_\gamma(\gamma)} \\
  & \gtrsim & \sum_{\gamma\in\Gamma \cap D_d(\lambda,\delta)} \mathrm{e}^{-c_\beta |\log|\gamma|-\log|\lambda| |}\\
  &\asymp   & \mathrm{Card} (\Gamma \cap  D_d(\lambda,\delta)). 
\end{eqnarray*}
Since $\dist(\gamma,\Lambda)\asymp |\gamma-\lambda|$, we get
$$\sup_{\lambda\in \Lambda} \mathrm{Card} (\Gamma \cap D_d(\lambda,\delta))<\infty.$$ 
Therefore $\Gamma$ is a finite union of $d-$separated sequences.

\end{proof}


\section{Proof of Theorem \ref{thm30}}\label{sectthree}

\begin{proof}
"$\Longleftarrow$"

 First of all, since the sequence {$\Gamma$ is $d-$separated},  by Lemma \ref{Boundedness} the operator $T_\Gamma$ is bounded from $\cF^2_\varphi$ to $\ell^2$. On the other hand, the sequence $\Gamma$ is a uniqueness set for  $\cF^2_\varphi$, and for any $\gamma\in \Gamma$, the sequence $\Gamma\setminus\{\gamma\}$ is a zero set for $\cF^2_\varphi$. Indeed, let $f\in\cF^2_\varphi$ be such that $f|_{\Gamma}=0$. By Hadamard's factorization theorem \cite{Lev} we can write $f=hG_\Gamma$, for an entire function $h$. Since  $f\in\cF^2_\varphi$, by Lemma \ref{keernel} we have 
 $$|f(z)|\leqslant \|f\|_{\varphi,2}\|\mathrm{k}_z\|_{\varphi,2}\lesssim \frac{\mathrm{e}^{\varphi(z)}}{|z|(\log|z|)^{\frac{1-\beta}{4}}}{,}\quad |z|>1.$$ 
  Lemma \ref{lem-estim} implies that
\begin{eqnarray*}
  |h(z)|\frac{\dist(z,\Gamma)}{(1+|z|)^{3/2+c_N}}\ \lesssim\ |f(z)|\mathrm{e}^{-\varphi(z)}\ \lesssim\ \frac{1}{|z|(\log|z|)^{\frac{1-\beta}{4}}} ,\quad |z|>1,
\end{eqnarray*}
where $c_N=\Delta_N +\varepsilon$, for a small enough $\varepsilon$ {so that} $c_N<1/2$. Then $h$ must be a polynomial. Combining this fact with 
 Lemma \ref{integ}, we conclude that $h$ must be identically zero, and hence $\Gamma$ is a uniqueness set for $\cF^2_\varphi$, therefore $T_\Gamma$ is injective. Futhermore {$T_\Gamma g_{n} =e_{n+1}$, $n\geq 0$, where $e_{n+1}$ is  the $(n+1)-$th element of the canonical basis of $\ell^2$,} and  
 $$\displaystyle g_n(z)=\frac{\|\mathrm{k}_{\gamma_n}\|_{\varphi,2}}{G'_{\Gamma}(\gamma_n)}\frac{G_\Gamma(z)}{z-\gamma_n}, \qquad z\in \C.$$
Therefore,  the range of $T_\Gamma$ is dense in $\ell^2$. In order to prove that  $T_\Gamma$ is onto, we associate with each $a\in\ell^2$ the function $H_a$ as follows :

\begin{eqnarray*}\label{ss}
H_a(z)=\sum_{\gamma\in\Gamma}\ a_\gamma\ \frac{\|\mathrm{k}_\gamma\|_{\varphi,2}}{G_\Gamma'(\gamma)}\frac{G_\Gamma(z)}{z-\gamma},\quad z\in\C.
\end{eqnarray*}
The estimate of the reproducing kernel given in Corollary \ref{keernel} and the estimate on $G_\Gamma$ proved in Lemma \ref{lem-estim} ensure that the above series converges uniformly on every compact set of $\C$. It remains now to show that $H_a\in\cF^2_\varphi$. Indeed, since $\cK_\Lambda$ is a Riesz basis  for $\cF^2_\varphi$ (see Lemma \ref{thm1}), we have
\begin{eqnarray}
\|H_a\|_{\varphi,2}^2 & \asymp & \sum_{m\geqslant 0}\left|\langle H_a,\Bbbk_{\lambda_m}\rangle\right|^2 \nonumber \\
   & = & \sum_{m\geqslant 0}\left|\sum_{n\geqslant 0}\ a_n\ \frac{G_\Gamma(\lambda_m)}{G'_\Gamma(\gamma_n)(\lambda_m-\gamma_n)}\frac{\|\mathrm{k}_{\gamma_n}\|_{\varphi,2}}{\|\mathrm{k}_{\lambda_m}\|_{\varphi,2}}\right|^2. \nonumber
\end{eqnarray}
Hence $\{H_a\}_{a\in\ell^2}\subset\cF^2_\varphi$ if and only if the matrix $A=\left(A_{n,m}\right)_{n,m\geqslant 0}$ defines a bounded operator on $\ell^2$, where
\[
\left|A_{n,m}\right|  =  \left| \frac{G_\Gamma(\lambda_m)}{G'_\Gamma(\gamma_n)(\lambda_m-\gamma_n)}\frac{\|\mathrm{k}_{\gamma_n}\|_{\varphi,2}}{\|\mathrm{k}_{\lambda_m}\|_{\varphi,2}} \right| .
\]
Recall that for every $z\in\C$ there exists $p\in\N$ such that $|\gamma_{p-1}|\leq |z|<|\gamma_p|$. We  have
$$
\left| G_\Gamma(z)\right|  \asymp\ \frac{\dist(z,\Gamma)}{|z|}\ \prod_{k=0}^{p-1}\ \left|z/\gamma_k\right| \quad 
$$
and 
$$
\left| G'_\Gamma(\gamma_n)\right|  \asymp\ \frac{1}{|\gamma_n|}\ \prod_{k=0}^{n-1}\ \left|\frac{\gamma_n}{\gamma_k}\right|. $$
Since $\delta_n=O\big((1+n)^{\frac{1}{\beta}-1}\big),$ there exists $M>0$ such that for every $m\geq 0$ there exists $|i|\leq M$ such that  the index $p$  corresponding to $\lambda_m$ is $m+i$. Consequently,
\[
\left|A_{n,m}\right|\ \asymp\ \frac{\|\mathrm{k}_{\gamma_n}\|_{\varphi,2}}{\|\mathrm{k}_{\lambda_m}\|_{\varphi,2}}\  \frac{\dist(\lambda_m,\Gamma)}{|\lambda_m-\gamma_n|}\ \frac{|\gamma_n|}{\lambda_m}\ \prod_{k=0}^{m+i-1}\ \left|\frac{\lambda_m}{\gamma_k}\right|\prod_{k=0}^{n-1}\ \left|\frac{\gamma_k}{\gamma_n}\right|. \]
By Lemma \ref{Kernel_Estimate} we have
\[\|k_z\|^2_{\varphi,2}\ \asymp\ \left|e_{n_z-1}(z)\right|^2 + \left|e_{n_z}(z)\right|^2 + \frac{\mathrm{e}^{2\varphi(z)}}{\rho(z)^2}, \]
where $n_z=[(1+\beta)\log^\beta|z|]$. Again, since $\delta_n=O((1+n)^{\frac{1}{\beta}-1})$, for every $n\geq 0$ there exists $|j|\leq M$ satisfying
\[\|k_{\gamma_n}\|^2_{\varphi,2}\ \asymp\ \left|e_{n+j}(\gamma_n)\right|^2 + \frac{\mathrm{e}^{2\varphi(\gamma_n)}}{|\gamma_n|^2\left(\log| \gamma_n|\right)^{1-\beta}}. \]
Also for every $m\geq 0$ we have
\[\|k_{\lambda_m}\|_{\varphi,2}\ \asymp\ \left|e_{m}(\lambda_m)\right|\ \asymp\ \frac{\mathrm{e}^{\varphi(\lambda_n)}}{|\lambda_m|\left(\log |\lambda_m|\right)^{\frac{1-\beta}{4}}}. \]
By these estimates we can write
$|A_{n,m}|\ \asymp\ I_{n,m} + J_{n,m},$ where 
\begin{eqnarray*}
I_{n,m} & \asymp& \frac{|e_{n+j}(\gamma_n)|}{|e_m(\lambda_m)|}\  \frac{|\lambda_m|}{|\lambda_m-\gamma_n|}\ \frac{|\gamma_n|}{|\lambda_m|}\ \prod_{k=0}^{m+i-1}\ \left|\frac{\lambda_m}{\gamma_k}\right|\prod_{k=0}^{n-1}\ \left|\frac{\gamma_k}{\gamma_n}\right|\nonumber\\ 
&=:& \ \frac{|\lambda_m|}{|\lambda_m-\gamma_n|} e^{\Theta(n,m)} \nonumber
\end{eqnarray*}
and
\begin{eqnarray*}
J_{n,m} & \asymp&  \frac{\left(\log|\lambda_m|\right)^{\frac{1-\beta}{4}}}{\left(\log|\gamma_n|\right)^{\frac{1-\beta}{2}}}\ \mathrm{e}^{\varphi(\gamma_n)-\varphi(\lambda_m)}\  \frac{|\lambda_m|}{|\lambda_m-\gamma_n|}\ \prod_{k=0}^{m+i-1}\ \left|\frac{\lambda_m}{\gamma_k}\right|\prod_{k=0}^{n-1}\ \left|\frac{\gamma_k}{\gamma_n}\right|\nonumber\\
&  =:& \frac{|\lambda_m|}{|\lambda_m-\gamma_n|}e^{\theta(n,m)}. \nonumber
\end{eqnarray*}
To estimate the coefficients $I_{n,m}$ and $J_{n,m}$, we first write
$$K_{n,m}\ =\ \underset{k=0}{\overset{m+i-1}{\prod}}\ \left|\frac{\lambda_m}{\gamma_k}\right|\underset{k=0}{\overset{n-1}{\prod}}\ \left|\frac{\gamma_k}{\gamma_n}\right|.$$
We next put $u_k:=\log|\lambda_k|$ for every $k\geq 0$. We have
\begin{multline*}
\log K_{n,m}  =  (m+i)u_m-n(u_n+\delta_n) -\sum_{k=0}^{m+i-1} u_k + \sum_{k=0}^{n-1} u_k -\sum_{k=0}^{m+i-1} \delta_k + \sum_{k=0}^{n-1} \delta_k  \nonumber\\
  =  \frac{u_{m+i}-u_n}{2}+\left[\beta u_n^{\beta+1}-n(u_n+\delta_n)\right] - \left[\beta u_{m+i}^{\beta+1}-(m+i)u_m\right] \nonumber\\
 +d_\beta \left(u_n^{1-\beta}-u_{m+i}^{1-\beta}\right)-\sum_{k=0}^{m+i-1} \delta_k + \sum_{k=0}^{n-1} \delta_k.  \nonumber
\end{multline*}
Hence, 
\begin{multline*}
\Theta(n,m) :=  \log K_{n,m} + \log|e_{n+j}(\gamma_n)|-\log e_m(\lambda_m) + (u_n+\delta_n)-u_m \nonumber\\
  =   \frac{u_n-u_{m+i}}{2} +\left(\frac{1}{2\beta(\beta+1)}+o(1)\right)\left(j^2u_n^{1-\beta}-i^2u_{m+i}^{1-\beta}\right)+ d_\beta \left(u_n^{1-\beta}-u_{m+i}^{1-\beta}\right) \nonumber \\
     -\sum_{k=0}^{m+i-1} \delta_k + \sum_{k=0}^{n} \delta_k  + \frac{1-\beta}{4}\log\frac{1+m}{1+n}.
\end{multline*} 
Now if $n\leq m+i$, then
$$ I_{n,m}\ \asymp\  \frac{|\lambda_m|}{|\lambda_m-\gamma_n|} \mathrm{e}^{\Theta(n,m)}\ \asymp\  \mathrm{e}^{\Theta(n,m)} $$
and $$\Theta(n,m) = -\left(\frac{1}{2}+o(1)\right)(u_{m+i}-u_n) - \sum_{k=n+1}^{m+i-1} \delta_k.$$
If $n\geq m+i$, then 
$$I_{n,m}\ \asymp\  \frac{|\lambda_m|}{|\lambda_m-\gamma_n|} \mathrm{e}^{\Theta(n,m)}\ \asymp\ \mathrm{e}^{\Theta(n,m)+u_m-u_n-\delta_n}$$
and \[\Theta(n,m)+u_m-u_n-\delta_n= -\left(\frac{1}{2}+o(1)\right)\left(u_n-u_{m+i}\right) + \sum_{k=m+i}^{n-1} \delta_k.\]
Hence,
\[I_{n,m}\ \asymp\ \exp\left[-\left(\frac{1}{2}+o(1)\right)\left|u_n-u_{m+i}\right| \pm \sum_{k=m+i}^{n-1} \delta_k \right].\]
Similarly, we have 
\begin{multline*}
\theta(n,m) :=  \log K_{n,m}+\varphi(\gamma_n)-\varphi(\lambda_m) + \frac{1-\beta}{4}\log\frac{1+m}{(1+n)^2} \\
         =  \frac{u_n-u_{m+i}}{2} + \left(\frac{1}{2\beta(\beta+1)}+o(1)\right)\left(j^2u_n^{1-\beta}-i^2u_{m+i}^{1-\beta}\right)+  \frac{1-\beta}{4}\log\frac{1+m}{(1+n)^2}\\
       + d_\beta \left(u_n^{1-\beta}-u_{m+i}^{1-\beta}\right)-\sum_{k=0}^{m+i-1} \delta_k + \sum_{k=0}^{n} \delta_k.     
\end{multline*} 
If $n\leq m+i$, we have 
$$J_{n,m}\ \asymp \frac{|\lambda_m|}{|\lambda_m-\gamma_n|}\mathrm{e}^{\theta(n,m)}\ \asymp \mathrm{e}^{\theta(n,m)},$$
 and
\begin{equation}
 \theta(n,m) = -\left(\frac{1}{2}+o(1)\right)\left(u_{m+i}-u_n\right)-\sum_{k=n+1}^{m+i-1} \delta_k. \nonumber
\end{equation}
If $n\geq m+i$, then 
$$J_{n,m}\ \asymp \frac{|\lambda_m|}{|\lambda_m-\gamma_n|}\mathrm{e}^{\theta(n,m)}\ \asymp \mathrm{e}^{\theta(n,m)+u_m-u_n-\delta_n}$$ and 
\[\theta(n,m)+u_m-u_n-\delta_n= -\left(\frac{1}{2}+o(1)\right)\left(u_n-u_{m+i}\right)+\sum_{k=m+i}^{n-1} \delta_k.\]
Hence,
\[J_{n,m}\ \asymp\ \exp\left[-\left(\frac{1}{2}+o(1)\right)\left|u_n-u_{m+i}\right|\pm\sum_{k=m+i}^{n-1} \delta_k\right].\]
Consequently,
\begin{eqnarray}
 |A_{n,m}|\ & \asymp\ & I_{n,m} + J_{n,m}\nonumber \ \asymp\ \exp\left[-\left(\frac{1}{2}+o(1)\right)\left|u_n-u_{m+i}\right|\pm\sum_{k=m+i}^{n-1} \delta_k\right]. \nonumber
\end{eqnarray}
Recall that
\[\Delta_N := \limsup_{n}\ \frac{1}{u_{n+N}-u_{n}}\ \left|\sum_{k=n+1}^{n+M}\ \delta_k\right|\ <\ \frac{1}{2}. \]
This implies that for a very small $\varepsilon$ (chosen in such a way that $\Delta_N+\varepsilon<1/2$) and for every $n\geq 0$ we have
\[ \left|\sum_{k=n+1}^{n+M}\ \delta_k\right|\ \leq\  \left(\Delta_N+\frac{\varepsilon}{2}\right)\left(u_{n+N}-u_{n}\right). \]
Thus,
\begin{eqnarray*}
 |A_{n,m}|\ 
 & \lesssim & \exp\left[-\left(\frac{1}{2}-\left(\Delta_N+\frac{\varepsilon}{2}\right)+o(1)\right)\left|u_n-u_{m+i}\right| \right].
\end{eqnarray*}
Since $\Delta_N<\frac{1}{2}$, the matrix $A= \left(A_{n,m}\right)$ defines a bounded operator on $\ell^2$ and, hence, $\cK_\Gamma$ is a Riesz basis for $\cF^2_\varphi$.\\

"$\Longrightarrow$"

Proof of \eqref{first}. Let  $\varphi(r)=\varphi_\beta(r)=\left(\log^+r\right)^{1+\beta}$, $0<\beta\leq 1$. If $\mathcal{K}_{\Gamma}$  is a Riesz  {basis} for $\cF_{\varphi_\beta}^{2}$ then $\Gamma$ is an interpolating sequence for $\cF_{\varphi_\beta}^{2}$ and hence for  $\cF_{\varphi_1}^{2}$.  By \cite[Corollary 2.3]{BDHK}  {$\Gamma$ is $d-$separated.} \\

Proof of \eqref{second}.  
Let $\Gamma=\{\gamma_n\}$ be a sequence of complex numbers such that $\cK_\Gamma$ is a Riesz basis for  $\cF^2_\varphi$. Then for every $\gamma\in\Gamma$ there exists a unique function $f_\gamma\in\cF^2_\varphi$ that satisfies the interpolation problem : 
$$\langle f_\gamma, \Bbbk_{\gamma} \rangle = 1 \quad \text{ and }  \quad \langle f_\gamma, \Bbbk_{\gamma} \rangle =0,\quad \gamma'\in \Gamma\setminus\{\gamma\}.$$
 Consequently, $\Gamma\setminus\{\gamma\}$ is a subset of the zero set of the function $f_\gamma$. Since $\mathcal{K}_\Gamma$ is complete, $\Gamma$ is a uniqueness set, then $\Gamma\setminus\{\gamma\}$ is exactly the zero set of $f_\gamma$. By  Hadamard's factorization theorem we have 
$f_\gamma(z) = c\frac{G_\Gamma(z)}{G'_\Gamma(\gamma)(z-\gamma)},$ for some constant $c\in\C$. Since $\langle f_\gamma, \Bbbk_{\gamma} \rangle = 1$, we  get $c=\|k_\gamma\|_{\varphi,2}$. Therefore, 
$$f_\gamma(z) = \|k_\gamma\|_{\varphi,2}\ \frac{G_\Gamma(z)}{G'_\Gamma(\gamma)(z-\gamma)}, \quad z\in\C.$$
Furthermore $\|f_\gamma\|_{\varphi,2}\asymp 1$. Hence,
\begin{eqnarray*}
\left| f_\gamma(\lambda_m) \right|\ \leq\ \|f_\gamma\|_{\varphi,2}\ \|k_{\lambda_m}\|\ \asymp\ \left|e_m(\lambda_m)\right|,
\end{eqnarray*}
for every $\lambda_m\in\Lambda$.\\

Suppose now that the sequence $(\delta_n/(1+n)^{\frac{1}{\beta}-1})$ is unbounded. Without loss of generality we assume the existence of a subsequence $\big(\delta_{n_k}/(1+n_k)^{\frac{1}{\beta}-1}\big)$ which tends to $+\infty$ (the case of  convergence to $-\infty$ is similar). Then, for every $k$ there exists $m_k$ such that $|n_k-m_k|\rightarrow\infty$ and $\gamma_{n_k}$ is close to $\lambda_{m_k}$.  
We obtain
\begin{eqnarray}\label{estima}
\left| f_{\gamma_{n_k}}(\lambda_{m_k}) \right|\ \lesssim\ \|k_{\lambda_{m_k}}\|_{\varphi,2}\ \asymp\ \left|e_{m_k}(\lambda_{m_k})\right|.
\end{eqnarray}
On the other hand, we have $
\|k_{\gamma_{n_k}}\|_{\varphi,2} \geq \ \left|e_{m_k}(\gamma_{n_k})\right|$ and

\begin{eqnarray}
\left|G_\Gamma(\lambda_{m_k})\right| \asymp\ \frac{|\lambda_{m_k}-\gamma_{n_k}|}{|\lambda_{m_k}|}\ \prod_{j=0}^{n_k-1} \left|\frac{\lambda_{m_k}}{\gamma_j}\right|,\quad \left|G'_\Gamma(\gamma_{n_k})\right| \asymp\ \frac{1}{|\gamma_{n_k}|}\ \prod_{j=0}^{n_k-1} \left|\frac{\gamma_{n_k}}{\gamma_j}\right|. \nonumber
\end{eqnarray}
For the sake of brevity, we denote $n:=n_k$ and $m:=m_k$. Identity \eqref{estima} becomes
\begin{eqnarray*}
\left|e_m(\lambda_m)\right| & \gtrsim & \left|f_{\gamma_n}(\lambda_m)\right|\ \gtrsim\ \frac{|\gamma_n|}{|\lambda_m|}\left|e_m(\gamma_n)\right|\prod_{j=0}^{n-1} \left|\frac{\lambda_{m}}{\gamma_j}\right|\left|\frac{\gamma_j}{\gamma_{n}}\right|. 
\end{eqnarray*}
Therefore,
\begin{eqnarray}
\left|\frac{e_m(\lambda_m)}{e_m(\gamma_n)}\right|  &  \gtrsim  &  \left|\frac{\lambda_m}{\gamma_n}\right|^{n-1}\ \iff\ \left|\frac{\lambda_m}{\gamma_n}\right|^{n-m-1} \lesssim\ 1. \label{boun}
\end{eqnarray}
Note that we can suppose  $|\lambda_m|\leq |\gamma_n|$  and, hence, $m=n+\delta'_n+\delta_{n,m}$, where $\left(\delta'_n\right)$ is a sequence  tending to infinity $\left(\delta'_n=\mbox{Const}(\beta)\delta_n/(1+n)^{\frac{1}{\beta}-1}\right)$ and $\left(\delta_{n,m}\right)$ is a bounded negative sequence such that $\delta_{n,m}\leq- 1$ (otherwise we replace $m$ by $m-m'$ for a suitable integer $m'$). Thus,
\begin{eqnarray*}
\log \left|\frac{\lambda_m}{\gamma_n}\right| & = & u_m - \left(u_n+\delta_n\right) \nonumber\\
     & = & b_\beta \left((1+\beta)(u_n+\delta_n)^\beta+\delta_{n,m}\right)^{\frac{1}{\beta}} - \left(u_n+\delta_n\right) \nonumber\\
     & = &  \left(\frac{1}{\beta(\beta+1)}+o(1) \right)\delta_{n,m}(u_n+\delta_n)^{1-\beta}.
\end{eqnarray*}
Since $n-m-1=n+\delta'_n-m-\delta'_n-1=-\delta_{n,m}-\delta_n-1$, 
\eqref{boun} becomes
\begin{eqnarray}
1 & \gtrsim & -(\delta_n+\delta_{n,m}+1) \left(\frac{1}{\beta(\beta+1)}+o(1) \right)\delta_{n,m}(u_n+\delta_n)^{1-\beta} \nonumber \ \geq \  c_\beta\ \delta_n u_n^{1-\beta}, 
\end{eqnarray}
which is impossible.\\

Proof of \eqref{third}. Recall first that the matrix of the coefficients
\begin{eqnarray*}
\left|A_{n,m}\right|  &=&  \left|\frac{G_\Gamma(\lambda_m)}{G'_\Gamma(\gamma_n)(\lambda_m-\gamma_n)}\frac{\|k_{\gamma_n}\|_{\varphi,2}}{\|k_{\lambda_m}\|_{\varphi,2}}\right|
\end{eqnarray*}
defines a bounded operator on $\ell^2$. From the proof of the first part we have
\begin{eqnarray*}
\left|A_{n,m}\right| & \gtrsim & \exp\left[-\frac{\left|u_{m+i}-u_n\right|}{2}+c_\beta\left(u_n^{1-\beta}-u_{m+i}^{1-\beta}\right) \mp \sum_{k=n+1}^{m+i-1}\delta_k\right].
\end{eqnarray*}
 Assume that for every $N\geq 1$, we have
$$\Delta_N :=\limsup_n \frac{1}{u_{n+N}-u_{n}}\left|\sum_{k=n+1}^{n+N}\ \delta_k\right| = \frac{1}{2}+\varepsilon_N,$$
for a nonnegative sequence $\left(\varepsilon_N\right)$. For every $N\geq 1$ there exists an integer $n_N$ (sufficiently large) such that 
\begin{eqnarray*}
\left|\sum_{k=n_N+1}^{n_N+N}\ \delta_k\right| \ \geq\  \frac{u_{n_N+N}-u_{n_N}}{2} + \varepsilon_N\frac{u_{n_N+N}-u_{n_N}}{2}.
\end{eqnarray*}
$\bullet$ Assume that there exists a subsequence $(N_l)$ such that $\underset{k=n_l+1}{\overset{n_l+N_l}{\sum}}\ \delta_k \ >0$ ($n_l$ is the integer $n_{N_l}$).  We  get 
\begin{eqnarray*}
\left|A_{n_l+N_l,n_l}\right| & \gtrsim & \exp\left[-\frac{\left|u_{n_l}-u_{n_l+N_l}\right|}{2}+c_\beta\left(u_{n_l+N_l}^{1-\beta}-u_{n_l}^{1-\beta}\right)+\sum_{k=n_l+1}^{n_l+N_l}\delta_k\right]\nonumber\\
& \gtrsim &  \exp\left[c_\beta\left(u_{n_l+N_l}^{1-\beta}-u_{n_l}^{1-\beta}\right)\right].
\end{eqnarray*}
This implies that $\left(A_{n_l+N_l,n_l}\right)$ is unbounded $(0< \beta< 1)$ and, hence, the matrix $\left(A_{n,m}\right)$ cannot represent a bounded operator on $\ell^2$.\\
$\bullet$ There exists $N_0\geq 1$ such that for every $N\geq N_0$ we have $\underset{k=n_l+1}{\overset{n_l+N_l}{\sum}}\ \delta_k <0$. For every $n$ we have \begin{eqnarray*}
\left|A_{n,n+N}\right| 
& \gtrsim & \exp\left[-\frac{u_{n+N}-u_n}{2}-c_\beta\left(u_{n+N}^{1-\beta}-u_n^{1-\beta}\right) - \sum_{k=n+1}^{n+N}\delta_k\right].
\end{eqnarray*}
If $(\varepsilon_N)$ contains a subsequence $\left(\varepsilon_N\right)_{N\in J}$ which is bounded below by some $\varepsilon>0$,  where $J$ is an infinite subset of $\N$. Then for $N\in J$, we have
\begin{eqnarray}
\left|A_{n_N,n_N+N}\right| & \gtrsim & \exp\left[-c_\beta\left(u_{n_N+N}^{1-\beta}-u_{n_N}^{1-\beta}\right) + \varepsilon\frac{u_{n_N+N}-u_{n_N}}{2}\right]. \nonumber 
\end{eqnarray}
This ensures that $\left(A_{n_N,n_N+N}\right)$ tends to infinity. Therefore,  the matrix $A$ cannot define a bounded operator on $\ell^2$. Suppose now that $(\varepsilon_N)$ converges to zero. Simple computations yield
\begin{eqnarray}
\left|A_{n_N,n_N+N}\right| & \gtrsim & \exp\left[-c_\beta\left(u_{n_N+N}^{1-\beta}-u_{n_N}^{1-\beta}\right) + \varepsilon_N\frac{u_{n_N+N}-u_{n_N}}{2}\right] \nonumber \\
 & \gtrsim & \exp\left[-c_\beta\left(u_{n_N+N}^{1-\beta}-u_{n_N}^{1-\beta}\right) + \frac{\varepsilon_N}{2}\frac{\beta(1+n_N)}{1-\beta^2}\left(u_{n_N+N}^{1-\beta}-u_{n_N}^{1-\beta}\right)\right] \nonumber \\
 & = & \exp\left[\left(u_{n_N+N}^{1-\beta}-u_{n_N}^{1-\beta}\right)\left(\frac{\varepsilon_N}{2}\frac{\beta(1+n_N)}{1-\beta^2}-c_\beta\right)\right]. \nonumber
\end{eqnarray}
Choose  $n_N$  such that  $\varepsilon_N(1+n_N)>\frac{4c_\beta}{\beta}(1-\beta^2),\ (0<\beta< 1).$
It follows  that
\begin{eqnarray*}
\left|A_{n_N,n_N+N}\right| & \gtrsim &  \exp\left[\left(u_{n_N+N}^{1-\beta}-u_{n_N}^{1-\beta}\right)\left(2c_\beta-c_\beta\right)\right]  \nonumber\\
 & = & \exp\left[c_\beta\left(u_{n_N+N}^{1-\beta}-u_{n_N}^{1-\beta}\right)\right].
\end{eqnarray*}
Thus $\left(A_{n_N,n_N+N}\right)_N$ is unbounded and, hence, the matrix $A=\left(A_{n,m}\right)$ cannot define a bounded operator on $\ell^2$. This  completes the proof.

\end{proof}

\section{Proof of Theorem \ref{thmInfty}}\label{sectioninftinity}
$"\Longleftarrow"$

Suppose that $\cK_\Gamma$ is a Riesz basis for $\cF^2_\varphi.$ {Then} $\Gamma$ satisfies conditions \eqref{first}, \eqref{second} and \eqref{third} of Theorem \ref{thm30}.

First, the sequence $\widetilde{\Gamma}=\Gamma\cup\{\gamma^*\}$ is a uniqueness set for $\cF^\infty_\varphi$. Indeed, if $f$ is a function from $\cF^\infty_\varphi$ that vanishes on $\widetilde{\Gamma}$, then $f=(1-z/\gamma^*) G_\Gamma h$, for an entire function $h$. Our estimates of $G_\Gamma$ imply that 
\begin{eqnarray}
|h(z)|\frac{\dist(z,\Gamma)}{(1+|z|)^{1/2+\Delta_N+\varepsilon}} \lesssim |f(z)|\mathrm{e}^{-\varphi(z)} \lesssim 1,\quad z\in\C. \nonumber
\end{eqnarray}
Hence,
$$|h(z)| \lesssim\ \frac{(1+|z|)^{1/2+\Delta_N+\varepsilon}}{\dist(z,\Gamma)},\quad z\in\C\setminus\Gamma.$$
Since $\Delta_N <1/2$, the function $h$ must be identically zero. \\

Let us prove that $\widetilde{\Gamma}=\{\gamma_n\}_{n\geq -1}$ is an interpolating sequence for $\cF^\infty_\varphi$, where $\gamma_{-1}=\gamma^*$. For this, let $v=(v_n)$ be a sequence such that 
$\|v\|_{\varphi,\infty,\widetilde{\Gamma}}<\infty$ and consider the  entire function  $L_v$, 
\begin{eqnarray*}
L_v(z)\ =\ \sum_{n\geq -1} v_n \frac{F_\Gamma(z)}{F'_\Gamma(\gamma_n)(z-\gamma_n)},\quad z\in\C.
\end{eqnarray*}
where $F_\Gamma=(1-z/\gamma^*) G_\Gamma$. Let us verify that $L_v\in\cF^\infty_\varphi$. According to Lemma \ref{propo}, $\Lambda\cup\{\lambda^*\}$ is a complete interpolating set for $\cF^\infty_\varphi$ and consequently
$$
\|L_v\|_{\varphi,\infty}  \asymp   \sup_{m\geq -1} \mathrm{e}^{-\varphi(\lambda_m)}\left|\sum_{n\geq 0} v_n \frac{F_\Gamma(\lambda_m)}{F'_\Gamma(\gamma_n)(\lambda_m-\gamma_n)}\right| 
=\sup_{m\geq -1} \left|\sum_{n\geq 0} v_n \mathrm{e}^{-\varphi(\gamma_n)} B_{n,m} \right|,  
$$ 
where
\begin{eqnarray*}
\left|B_{n,m}\right| & = &  \mathrm{e}^{\varphi(\gamma_n)-\varphi(\lambda_m)} \left|\frac{F_\Gamma(\lambda_m)}{F'_\Gamma(\gamma_n)(\lambda_m-\gamma_n)}\right| \nonumber\\
 & \asymp & \mathrm{e}^{\varphi(\gamma_n)-\varphi(\lambda_m)} \frac{|\lambda_m|}{|\gamma_n|} \left|\frac{G_\Gamma(\lambda_m)}{G'_\Gamma(\gamma_n)(\lambda_m-\gamma_n)}\right| \nonumber \\
 & \asymp & \left|A_{n,m}\right|\mathrm{e}^{o(1)|u_m-u_n|}, 
\end{eqnarray*}
and  $u_n=\log|\lambda_n|$. The estimates on the matrix $A=\left(A_{n,m}\right)$ imply that $L_v$ belongs to $\cF^\infty_\varphi$.\\

$"\Longrightarrow"$ 

Suppose now that $\Gamma\cup\{\gamma^*\}$ is a complete interpolating set for $\cF^\infty_\varphi$. To prove that $\cK_\Gamma$ is a Riesz basis for $\cF^2_\varphi$, it suffices to verify that $\Gamma$ satisfies conditions \eqref{first}-\eqref{third} of Theorem \ref{thm30}. First  $\Gamma$ is $d-$separated because every interpolating sequence for $\cF^\infty_{\varphi_\beta}$ is also an interpolating sequence for $\cF^\infty_{\varphi_1}$. Furthermore, remark that 
\begin{eqnarray}
\left|B_{n,m}\right| \asymp \mathrm{e}^{\varphi(\gamma_n)-\varphi(\lambda_m)} \frac{|\lambda_m|}{|\gamma_n|} \left|\frac{G_\Gamma(\lambda_m)}{G'_\Gamma(\gamma_n)(\lambda_m-\gamma_n)}\right| \gtrsim \frac{(\log \gamma_n)^{\frac{1-\beta}{2}}}{(\log\lambda_n)^{\frac{1-\beta}{4}}} |A_{n,m}|.\nonumber 
\end{eqnarray}
Arguing as in the proof of Theorem \ref{thm30}, if $(\delta_n)$ contains a subsequence  $\left(\delta_{n_k}\right)$ such that $\big(\delta_{n_k}/(1+n_k)^{\frac{1}{\beta}-1}\big)$ is unbounded, then $\left(A_{n,m}\right)$ is unbounded and, consequently, $\left(B_{n,m}\right)$ is unbounded too. Thus \eqref{second}  holds. Suppose now that $\Delta_N =\frac{1}{2}+\varepsilon_N$, for a nonnegative sequence $\left(\varepsilon_N\right)$. Again as in the proof of Theorem \ref{thm30}, the sequence $|A_{n_k+N_k,n_k}|+|A_{n_k,n_k+N_k}|$ is unbounded and,  hence,  $\left(B_{n,m}\right)$ is unbounded too. This proves \eqref{third} and completes the proof.

{
\section{Final remarks}
{The following remark shows that the superior limit in condition \eqref{third} in Theorem \ref{thm30} can be replaced by a  supremum. This shows that in  the case $\beta=1$, Theorem \ref{thm30} and \cite[Theorem 1.1]{BDHK} are equivalent.
\begin{rem}\label{rem1}
Let $(\delta_n)$ be a sequence of real numbers such that $\left(\delta_n/(1+n)^{\frac{1}{\beta}-1}\right)\in\ell^\infty$. Let $N$ be a positive integer and $\delta=\frac{1}{2\beta(1+\beta)^{\frac{1}{\beta}}}$. The following conditions are equivalent 
\begin{enumerate}
\item $\displaystyle \underset{n}{\sup}\frac{1}{N(1+n)^{\frac{1}{\beta}-1}}\left|\underset{k=n+1}{\overset{n+N}{\sum}}\
\delta_k\right| <  \delta,$
\item $\displaystyle \underset{n}{\limsup}\frac{1}{N(1+n)^{{\frac{1}{\beta}-1}}}\left|\underset{k=n+1}{\overset{n+N}{\sum}}\ 
\delta_k\right| <  \delta.$
\end{enumerate}
\end{rem}
\begin{proof}
First, it is obvious that $$\underset{n}{\limsup}\frac{1}{N(1+n)^{\frac{1}{\beta}-1}}\left|\underset{k=n+1}{\overset{n+N}{\sum}}\
\delta_k\right|\leq \underset{n}{\sup}\frac{1}{N(1+n)^{\frac{1}{\beta}-1}}\left|\underset{k=n+1}{\overset{n+N}{\sum}}\
\delta_k\right|.$$ 

Conversely, let $\Gamma=\{\gamma_n\}$ be a $d-$separated sequence of complex numbers ordered in such a way that $|\gamma_n|\leq|\gamma_{n+1}|$. Set  $\gamma_n=\lambda_n\mathrm{e}^{\delta_n}\mathrm{e}^{i\theta_n}$. Suppose that 
$$\Delta_N:=\underset{n}{\limsup}\frac{1}{N(1+n)^{\frac{1}{\beta}-1}}\left|\underset{k=n+1}{\overset{n+N}{\sum}}\ 
\delta_k\right|<\delta.$$ Let $\varepsilon\in(0,\delta-\Delta_N)$. There exists $m\geq 0$ such that 
$$\sup_{n\geq m}\frac{1}{N(1+n)^{\frac{1}{\beta}-1}}\left|\sum_{k=n+1}^{n+N} 
\delta_k\right|\leq \Delta_N+\varepsilon <\delta.$$
Thus, $\cK_{\widetilde{\Gamma}}$ is a Riesz basis for $\cF^2_{\varphi}$, where $$\widetilde{\Gamma}:=\left\{\mathrm{e}^{\left(\frac{n+1}{1+\beta}\right)^{\frac{1}{\beta}}}\ :\ 0\leq n\leq m-1 \right\}\cup\left\{\gamma_n\in\Gamma\ :\ n\geq m\right\}.$$ Consequently, $\cK_\Gamma$ is also a Riesz basis for $\cF^2_{\varphi}$.
\end{proof}
}

\bibliographystyle{amsplain}

\end{document}